\documentclass[a4paper,oneside,11pt, reqno]{amsart}

\usepackage{mathrsfs}
\usepackage{amsfonts,amssymb,amsmath,amsthm}
\usepackage{url}
\usepackage{enumerate}

\setcounter{tocdepth}{1}

\usepackage{bbm}
\usepackage{float}
\usepackage{pstricks}
\usepackage{amscd}
\usepackage{amsmath}
\usepackage{amsxtra}
\usepackage[T1]{fontenc}
\usepackage[utf8]{inputenc}
\usepackage{lipsum}
\usepackage{tikz}
\usetikzlibrary{arrows}
\usetikzlibrary{calc}
\usepackage{hyperref}

\theoremstyle{plain}
\newtheorem{theorem}{Theorem}[section]

\newtheorem{lemma}[theorem]{Lemma}
\newtheorem{proposition}[theorem]{Proposition}
\newtheorem{corollary}[theorem]{Corollary}

\theoremstyle{definition}

\theoremstyle{remark}
\newtheorem{remark}[theorem]{Remark}

\newtheorem{case}{Case}[section]

\newtheorem{casen}{Case}
\newtheorem{case-new}{Case}
\newtheorem{subcase}{Subcase}

\numberwithin{equation}{section}

\newcommand{\mf}{\mathfrak}
\newcommand*{\Ge}{\geqslant}
\newcommand*{\Le}{\leqslant}

\usepackage{etoolbox}

\makeatletter
\@namedef{subjclassname@2020}{%
  \textup{2020} Mathematics Subject Classification}
\makeatother

\usepackage{mathrsfs}
\usepackage{epsfig}
\usepackage[active]{srcltx}

\newcommand{\ncom}{\newcommand}
\ncom{\bq}{\begin{equation}}
\ncom{\eq}{\end{equation}}
\ncom{\beqn}{\begin{eqnarray*}}
\ncom{\eeqn}{\end{eqnarray*}}
\ncom{\beq}{\begin{eqnarray}}
\ncom{\eeq}{\end{eqnarray}}
\ncom{\nno}{\nonumber}
\ncom{\rar}{\rightarrow}
\ncom{\Rar}{\Rightarrow}
\ncom{\noin}{\noindent}
\ncom{\bc}{\begin{centre}}
\ncom{\ec}{\end{centre}}
\ncom{\sz}{\scriptsize}
\ncom{\rf}{\ref}
\ncom{\sgm}{\sigma}
\ncom{\Sgm}{\Sigma}
\ncom{\dt}{\delta}
\ncom{\Dt}{Delta}
\ncom{\lmd}{\lambda}
\ncom{\Lmd}{\Lambda}
\ncom{\eps}{\epsilon}
\ncom{\pcc}{\stackrel{P}{>}}
\ncom{\dist}{{\rm\,dist}}
\ncom{\sspan}{{\rm\,span}}
\ncom{\im}{{\rm Im\,}}
\ncom{\sgn}{{\rm sgn\,}}
\ncom{\ba}{\begin{array}}
\ncom{\ea}{\end{array}}
\ncom{\eop}{\hfill{{\rule{2.5mm}{2.5mm}}}}
\ncom{\eoe}{\hfill{{\rule{1.5mm}{1.5mm}}}}
\ncom{\eof}{\hfill{{\rule{1.5mm}{1.5mm}}}}
\ncom{\hone}{\mbox{\hspace{1em}}}
\ncom{\htwo}{\mbox{\hspace{2em}}}
\ncom{\hthree}{\mbox{\hspace{3em}}}
\ncom{\hfour}{\mbox{\hspace{4em}}}
\ncom{\hsev}{\mbox{\hspace{7em}}}
\ncom{\vone}{\vskip 2ex}
\ncom{\vtwo}{\vskip 4ex}
\ncom{\vonee}{\vskip 1.5ex}
\ncom{\vthree}{\vskip 6ex}
\ncom{\vfour}{\vspace*{8ex}}
\ncom{\norm}{\|\;\;\|}
\ncom{\integ}[4]{\int_{#1}^{#2}\,{#3}\,d{#4}}
\ncom{\inp}[2]{\langle{#1},\,{#2} \rangle}
\ncom{\Inp}[2]{\Langle{#1},\,{#2} \Langle}
\ncom{\vspan}[1]{{{\rm\,span}\#1 \}}}
\ncom{\dm}[1]{\displaystyle {#1}}



\keywords{Joint completely monotone, Bessel function, toral $m$-isometry, Cauchy dual}

\subjclass[2020]{Primary 44A60, 47A13; Secondary 47B37, 47B39}

\begin{document}
\title[ The Cauchy dual subnormality problem]{A complete solution to the Cauchy dual subnormality problem for torally expansive toral $3$-isometric weighted $2$-shifts}
\author[R. Nailwal]{Rajkamal Nailwal}
\address{Department of Mathematics and Statistics\\
Indian Institute of Technology Kanpur, India}
    \email{rnailwal@iitk.ac.in, raj1994nailwal@gmail.com}
\maketitle
\begin{abstract} 
In this paper, we present a complete solution to the Cauchy dual subnormality problem for torally expansive toral $3$-isometric weighted $2$-shifts. This solution is obtained by solving a couple of Hausdorff moment problems arising from $2$-variable polynomials of lower bi-degree.
\end{abstract}

\section{Introduction}
The Cauchy dual subnormality problem (for short CDSP) in $n$-variables asks whether
the Cauchy dual of an $m$-isometric $n$-tuple is jointly subnormal.
This problem for $2$-isometries has received significant attention, with extensive studies revealing intriguing links to moment theory and complex analysis (see \cite{ACJS, BS2019, CGR}; for solutions to CDSP for various classes of $m$-isometries, see \cite[Proposition~1.7]{AC2017}, \cite[Proposition~1.3]{ACJS},\cite[Theorem~3.4]{ BS2019}, \cite[Theorem~2.1]{CGR},
\cite[Theorem~4.6]{JK2021} etc; for Brownian-type operators, see \cite[Theorem~1.2]{SZJS2021}). In this paper, we present a complete solution to the Cauchy dual subnormality problem for torally expansive toral $3$-isometric weighted $2$-shifts. A special case of this (the case of separate 2-isometries) has been obtained in \cite[Theorem~4.9]{ACN}. 
Moreover, we present several families of Hausdorff moment net arising from the reciprocal of polynomials of bi-degree $(2, 1)$ and $(2,
2)$. Before we state the main result of this paper, let us fix some notations and recall the relevant notions.

Let $n$ be a positive integer and $X$ be a set.  The notation $X^n$ represents the Cartesian product of $X$ with itself, taken $n$ times. Denote by $\mathbb{Z}_+$ and $\mathbb{R}_+,$ the set of nonnegative integers and nonnegative real numbers, respectively. Let $\alpha = (\alpha_1, \ldots, \alpha_n), \beta
= (\beta_1, \ldots, \beta_n) \in \mathbb{Z}^n_+.$ Set $|\alpha|=\alpha_1 + \cdots + \alpha_n$ and  $(\beta)_\alpha = \prod_{j=1}^n (\beta_j)_{\alpha_j},$ where $(\beta_j)_{0}=1,$ $(\beta_j)_{1}=\beta_j$ and 
 $$(\beta_j)_{\alpha_j} = \beta_j(\beta_j -1) \cdots (\beta_j -\alpha_j+1), \quad \alpha_j \Ge 2, ~j=1, \ldots, n.$$ 
We denote
$\alpha \Le \beta$ if $\alpha_j \Le \beta_j$ for every $j=1, \ldots, n.$
For $\alpha \Le \beta$, we let $\binom{\beta}{\alpha}=\prod_{j=1}^n \binom{\beta_j}{\alpha_j}.$

For a net $\{a_\alpha\}_{\alpha \in \mathbb Z^n_+}$ and 
$j=1, \ldots, n,$ let $\triangle_j$ denote the {\it forward difference operator} given by
\beqn
\triangle_j a_\alpha = a_{\alpha + \varepsilon_j} - a_\alpha, \quad \alpha \in \mathbb Z^n_+,
\eeqn
where $\varepsilon_j$ stands for the $n$-tuple with $j$th entry equal to $1$ and $0$ elsewhere. Note that for any $i,j \in \{1,2,\ldots, n\}, 
\triangle_i \triangle_j =\triangle_j \triangle_i.$
For $\alpha=(\alpha_1, \ldots, \alpha_n) \in \mathbb Z^n_+,$ let $\triangle^\alpha$ denote the operator $\prod_{j=1}^n \triangle^{\alpha_j}_j.$  
For a polynomial $p$ in one variable, let $\deg p$ denote the degree of $p.$ A polynomial $p$ of two variables is said to be of {\it bi-degree} $\alpha=(\alpha_1, \alpha_2) \in \mathbb Z^2_+$ if for each $j=1, 2,$ $\alpha_j$ is the largest integer for which $\partial^{\alpha_j}_j p \neq 0.$

We now recall the definition of joint complete monotonicity of a net.
A net $\mf a = \{a_\alpha\}_{\alpha \in \mathbb{Z}^n_+}$ is said to be {\it joint completely monotone} if
 \beqn
 (-1)^{|\beta|} \triangle^{\beta} a_{\alpha} \Ge 0,  \quad \alpha, \beta \in \mathbb Z^n_+.
 \eeqn
When $n=1,$ we simply refer to $\mf a$ as a {\it completely monotone} sequence. We say $\mf a$ is a {\it separate completely monotone} if  for every $j\in \{1, \ldots, n\}$ ,  $k \in \mathbb{Z_+},$
\beqn
 (-1)^{k} \triangle_j^{k} a_{\alpha} \Ge 0,  \quad  \alpha \in \mathbb Z^n_+.
\eeqn
For a detailed account of complete monotonicity in one and several variables, the reader is referred to \cite{Kball, BCR1984, BD2005, RZ2019}.
\begin{remark}\label{remark-cm-gen} 
It is readily seen that a  joint completely monotone net is separate completely monotone.
Also, if $\phi$ is completely monotone function on $\mathbb{Z}_+^n,$ then for any $\beta \in \mathbb{Z}_+^n,$ the function $\alpha \mapsto \phi(\alpha+\beta)$ is also completely monotone on $\mathbb{Z}_+^n.$ 
\hfill $\diamondsuit$
\end{remark}
We now recall a solution to the multi-dimensional Hausdorff moment problem. A net $\mf a = \{a_\alpha\}_{\alpha \in \mathbb{Z}^n_+}$ is joint completely monotone if and only if it is a {\it Hausdorff moment net}, that is, if there exists a positive Radon measure $\mu$ concentrated on $[0,1]^n$ such that 
\beqn
a_\alpha =\int _{[0,1]^n} t^{\alpha} \mu(dt), \quad \alpha \in \mathbb Z^n_+.
\eeqn 
(see \cite[Proposition~4.6.11]{BCR1984}). If such a measure $\mu$ exists, then it is unique. This is a consequence of the $n$-dimensional Weierstrass theorem and the Riesz representation theorem (see  \cite[Theorem~2.14]{R2006} and \cite[Lemma 4.11.3]{S2015}). 
We refer to $\mu$ as the {\it representing measure} of $\mf a.$

We now recall some operator-theoretic prerequisite. Let $n$ be a positive integer. A operator tuple $T=(T_1, \ldots, T_n)$ on a complex separable Hilbert space $H$ is said to be {\it commuting n-tuple} if  $T_1, \ldots, T_n$ are bounded linear operator on $H$ and $T_i T_j = T_j T_i$ for every $1 \Le i \neq j \Le n.$
  A commuting $n$-tuple $T$ is said to be a {\it toral expansion} (resp. a {\it toral contraction}) if $T^*_jT_j \Ge I$ (resp. $T^*_jT_j \Le I$) for every $j \in \{1, \ldots, n\}.$  We say that a commuting $n$-tuple $T=(T_1, \ldots, T_n)$ is {\it jointly subnormal} if there exist a Hilbert space $K$ containing $H$ and a commuting $n$-tuple $N$ of normal operators $N_1, \ldots, N_n$ on $K$ such that
\beqn
T_j = {N_j}|_{H}, \quad j=1, \ldots, n.
\eeqn
Let $m$ be a positive integer. Following \cite{Ag1990, Athavale2001, R1988}, we say that a commuting $n$-tuple $T$ is said to be a {\it toral $m$-isometry} if 
\beqn 
\sum_{\overset{\alpha \in \mathbb Z^n_+}{0 \Le \alpha \Le \beta}} (-1)^{|\alpha|} \binom{\beta}{\alpha}T^{*\alpha}T^{\alpha} = 0, \quad \beta \in \mathbb Z^n_+, ~|\beta|=m,
\eeqn
where $T^{\alpha}$ denotes the bounded linear operator $\prod_{j=1}^nT^{\alpha_j}_j$ and $T^{*\alpha}$ stands for the Hilbert space adjoint of $T^{\alpha}.$ The reader is referred to  \cite{Ag1990, AS1995, Athavale2001, AS1999, CC2012, JJS2020, S2001} for the basic theory of toral $m$-isometries.
 
Assume that $T^*_jT_j$ is invertible for every $j=1, \ldots, n.$ Following \cite{CC2012, S2001}, we refer to the $n$-tuple $T^{\mathfrak{t}}:=
(T^{\mathfrak{t}}_1, \cdots, T^{\mathfrak{t}}_n)$ as the \textit{operator tuple torally Cauchy dual} to $T$ where $T^{\mathfrak{t}}_j :=
T_j(T^*_jT_j)^{-1},$ for $j=1, \cdots, n.$ Note that {\it toral $m$-isometric tuple} $T=(T_1, \ldots, T_n)$ is a {\it separate $m$-isometric tuple}, that is, $T_1, \ldots, T_n$ are $m$-isometries. By \cite[Lemma~1.21]{AS1995}, $T_{j}$ is left invertible for $1 \Le j \Le n.$ Hence, the toral Cauchy dual of a toral $m$-isometric $n$-tuple exists.
 
 Let $\mathscr H$ be a Hilbert space with orthonormal basis $\mathscr E = \{e_{\alpha}:\alpha \in \mathbb{Z}^n_+\}.$  Let ${\bf w} = \big\{w^{(j)}_{\alpha} : j=1, \ldots, n, ~\alpha \in \mathbb{Z}^n_+ \big\}$ be a collection of complex numbers. For $j=1, \ldots, n$ and any $\alpha \in \mathbb{Z}^n_+$, define $\mathscr W = (\mathscr W_1, \ldots, \mathscr W_n)$ by
  $$ \mathscr W_j e_\alpha = w^{(j)}_\alpha e_{\alpha + \varepsilon_j},$$ 
  where $\varepsilon_j$ is a vector with a $1$ in the $j$th position and zeros elsewhere. Note that by extending it linearly on $\mathscr E,$ $\mathscr W_1, \ldots, \mathscr W_n$ define bounded operator on $\mathscr H$  if and only if $\sup_{\alpha \in \mathbb Z^n_+}|w^{(j)}_\alpha| < \infty$ for every $j=1, \ldots, n.$
 Also for any $ i, j \in \{ 1, \ldots, n \}, \mathscr W_i$ and $\mathscr W_j$ commute if and only if
\beqn
 w^{(i)}_\alpha w^{(j)}_{\alpha
    + \varepsilon_i}=w^{(j)}_\alpha w^{(i)}_{\alpha + \varepsilon_j}, \quad \alpha \in {\mathbb Z}^n_+.
    \eeqn
    Let $\mathscr W$ be a commuting weighted $n$-shift. Note that for any $\beta \in \mathbb Z^n_+,$ there exists a positive scalar $m(\beta)$ such that
 \beq \label{W-beta}
 \mathscr W^{\beta} e_0= m(\beta)e_\beta.
 \eeq
  For more information on the basic theory of weighted multi-shifts, the reader is referred to \cite{ AS1999, JS2001, JL1979}.
  
In what follows, we assume that $\bf w$ forms a bounded subset of positive real numbers and $\mathscr W$ is a commuting $n$-tuple. We will denote the weighted $n$-shift $\mathscr W$ with weight multi-sequence ${\bf w}$ by $\mathscr W : {{w^{(j)}_\alpha}}$. 
 
 Let $\mathscr W : \{w^{(j)}_\alpha\}$ be a weighted $n$-shift such that $\mathscr W^*_j \mathscr W_j$ is invertible for each $j=1, \ldots, n$. The operator tuple $\mathscr W^{\mathfrak{t}}$ torally Cauchy dual to the weighted $n$-shift $\mathscr W$, satisfies the following relation:
\beq \label{dual-action} \mathscr W^{\mathfrak{t}}_j e_\alpha =
\frac{1}{w^{(j)}_\alpha} \, e_{\alpha + \varepsilon_j}, \quad j=1, \ldots, n. 
\eeq
It is now easy to see that:
 \beq \label{reciprocal}
 \|(\mathscr W^{\mathfrak{t}})^\alpha e_0\|^2  = \frac{1}{\|\mathscr W^{\alpha} e_0\|^2}, \quad \alpha  \in \mathbb Z^n_+.
 \eeq
 
 To state the main result, we find it convenient to  introduce the following notation: For $i,j \in \{0,1,2\},$
\beq \label{notation}
\rho_{ij}={\triangle_1^i\triangle_2^j(\|\mathscr W^{\alpha}e_0\|^2)|_{\alpha =0} },\,\, \rho_1=2\rho_{10}-\rho_{20},\,\, \rho_2=2\rho_{01}-{\rho_{02}}.
\eeq
We are now ready to state the main result of this paper. For the sake of completeness, we include the separate 2-isometry case as part (a) below (see \cite[Theorem~4.9]{ACN}).

\begin{theorem} \label{CDSP-T3I}
Let $\mathscr W : \{w^{(j)}_\alpha\}$ be a  torally expansive toral $3$-isometric weighted $2$-shift and let 
$\mathscr W^{\mathfrak{t}}$ be the operator tuple 
torally Cauchy dual to $\mathscr W$. Let $\rho_1, \rho_2$ and $\rho_{ij}, i,j \in \{0,1,2\}$ be as given in \eqref{notation}. The following statements holds$:$ 
\begin{itemize}
 \item[$\mathrm{(a)}$] Assume that $\mathscr W$ is a separate $2$-isometry. Then the operator tuple $\mathscr W^{\mathfrak{t}}$
 is jointly subnormal if and only if $$\rho_{11}\Le \rho_{10}\rho_{01}.$$
 \item[$\mathrm{(b)}$] Assume that $\mathscr W_1$ is not a $2$-isometry. Then the operator tuple $\mathscr W^{\mathfrak{t}}$
 is jointly subnormal if and only if $\rho_1>0, \rho_1^2\Ge 8\rho_{20},$ 
 and exactly any one of the following holds$:$
  \begin{enumerate}
     \item[$\mathrm{(i)}$] $\rho_{11}=0,\rho_{01}=0,{\rho_{02}}=0$,
     \item[$\mathrm{(ii)}$] $\rho_{11}>0,\rho_2>0, \rho_{11}^2\Ge\rho_{20}\rho_{02},$
      \beqn\label{simple-inq}\notag
     (\rho_{20}\rho_2-\rho_{11}\rho_1)^2  \Le (4\rho_{11}^2-{\rho_{20}\rho_{02}}) (\frac{\rho^2_1}{4}-2\rho_{20}).
     \eeqn 
\end{enumerate}
\item[$\mathrm{(c)}$] Assume that $\mathscr W_2$ is not a $2$-isometry. Then the operator tuple $\mathscr W^{\mathfrak{t}}$
 is jointly subnormal if and only if $\rho_2>0, \rho_2^2\Ge 8\rho_{02},$ 
 and exactly any one of the following holds$:$
  \begin{enumerate}
     \item[$\mathrm{(i)}$] $\rho_{11}=0,\rho_{10}=0,{\rho_{20}}=0$,
     \item[$\mathrm{(ii)}$] $\rho_{11}>0,\rho_1>0, \rho_{11}^2\Ge\rho_{20}\rho_{02},$
      \beqn 
     (\rho_{02}\rho_1-\rho_{11}\rho_2)^2  \Le (4\rho_{11}^2-{\rho_{02}\rho_{20}}) (\frac{\rho^2_2}{4}-2\rho_{02}).
     \eeqn
\end{enumerate}
 \end{itemize}
\end{theorem}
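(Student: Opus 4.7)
\bigskip

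\noindent\textbf{Proof proposal.}

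The plan is to convert the joint subnormality question into a two-variable Hausdorff moment problem and then analyze, case by case, when the reciprocal of a specific quadratic polynomial is a joint completely monotone net. First, combining the multi-dimensional Berger-type criterion for joint subnormality of commuting weighted multishifts with \eqref{reciprocal}, one sees that $\mathscr W^{\mathfrak t}$ is jointly subnormal if and only if the net $\bigl\{1/\|\mathscr W^{\alpha}e_0\|^2\bigr\}_{\alpha\in\mathbb Z_+^2}$ is a Hausdorff moment net, i.e.\ joint completely monotone on $\mathbb Z_+^2$. Thus the entire question is reduced to a moment problem for the scalar sequence $p(\alpha):=\|\mathscr W^{\alpha}e_0\|^2$.

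Next, I would exploit the toral $3$-isometry hypothesis. Applying $\triangle^{\beta}$ with $|\beta|=3$ to $p$ and using the defining identity forces $p$ to be a polynomial on $\mathbb Z_+^2$ whose expansion in the basis $\binom{\alpha_1}{i}\binom{\alpha_2}{j}$ has no term with $i+j\ge 3$. Reading off the coefficients via \eqref{notation} yields the canonical form
\begin{equation*}
p(\alpha_1,\alpha_2)=1+\rho_{10}\alpha_1+\rho_{01}\alpha_2+\rho_{20}\binom{\alpha_1}{2}+\rho_{11}\alpha_1\alpha_2+\rho_{02}\binom{\alpha_2}{2},
\end{equation*}
so $p$ has bi-degree at most $(2,2)$ and total degree at most $2$. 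Toral expansivity forces $p(\alpha)\ge 1$, and each of the coefficients $\rho_{10},\rho_{01},\rho_{20},\rho_{02}$ is nonnegative. The three cases of the theorem now correspond exactly to the bi-degree of $p$: part (a) is the bi-degree $(1,1)$ regime ($\rho_{20}=\rho_{02}=0$), which is exactly \cite[Theorem~4.9]{ACN}; parts (b) and (c) handle the bi-degrees $(2,1)$, $(2,2)$ (and their symmetric counterparts) that occur once $\mathscr W_1$ or $\mathscr W_2$ fails to be a $2$-isometry.

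For the necessary direction in (b) and (c), I would compute low-order forward differences of $1/p(\alpha)$ at the origin, namely $-\triangle_j(1/p)$, $\triangle_j^2(1/p)$ and $-\triangle_1\triangle_2(1/p)$, and translate the required sign conditions into polynomial inequalities in the $\rho_{ij}$'s; the hypotheses $\rho_1>0$, $\rho_1^2\ge 8\rho_{20}$, and $\rho_{11}^2\ge\rho_{20}\rho_{02}$ should fall out naturally (the first two from the one-variable $3$-isometry pieces in the spirit of \cite[Proposition~1.7]{AC2017} or the Jab\l{}o\'nski--Stankus analysis, and the last from a $2\times 2$ positivity of a mixed difference matrix). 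The final inequality in (b)(ii) should come from a Cauchy--Schwarz / discriminant argument applied to $\triangle_1\triangle_2^2(1/p)\ge 0$, i.e.\ to a quadratic positivity condition coupling the first and second differences.

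The main obstacle, and the substance of the paper, is the sufficiency direction: given the listed inequalities, produce a positive Radon measure on $[0,1]^2$ representing $1/p$. I would attack this by seeking an integral representation of $1/p$ involving Bessel functions, exploiting the identities
\begin{equation*}
\frac{1}{a+bt}=\int_0^\infty e^{-s(a+bt)}\,ds,\qquad \int_0^\infty e^{-sa}J_0(2\sqrt{sc})\,ds=\tfrac{1}{a}\,e^{c/a},
\end{equation*}
which allow one to rewrite the reciprocal of a quadratic of bi-degree $(2,1)$ or $(2,2)$ as an iterated Laplace--Bessel transform. The degenerate subcase (i) of (b) (resp.\ (c)) is immediate: $p$ reduces to a function of $\alpha_1$ (resp.\ $\alpha_2$) alone and the one-variable Hausdorff criterion from the separate case applies. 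In the generic subcase (ii) the Bessel representation, together with the stated quadratic constraints, should yield a nonnegative density on $[0,1]^2$, completing the sufficiency. Part (c) is then the mirror image of (b) under swapping the roles of the two shifts.
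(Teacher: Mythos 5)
Your reduction is the same as the paper's: joint subnormality of $\mathscr W^{\mathfrak t}$ is equivalent, via the Berger-type criterion and \eqref{reciprocal}, to joint complete monotonicity of $\{1/\|\mathscr W^{\alpha}e_0\|^2\}$, and the toral $3$-isometry hypothesis forces $\|\mathscr W^{\alpha}e_0\|^2$ to be the quadratic polynomial $p$ with coefficients read off from \eqref{notation} (this is Proposition~\ref{3-iso-wt-char} and Proposition~\ref{subnormal-reciprocal}). Your case division by bi-degree also matches the paper exactly. The problem is that everything after that --- the actual characterization of when $1/p$ is joint completely monotone for $p$ of bi-degree $(2,1)$ and $(2,2)$ --- is precisely the content of Theorems~\ref{bi-deg-2-1-second} and \ref{MT}, and your sketch of how to prove those contains genuine gaps in both directions.

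For necessity, computing ``low-order forward differences at the origin'' cannot yield the stated inequalities: joint complete monotonicity is an infinite family of constraints, and the specific inequalities \eqref{Impeq} and the one in (b)(ii) are not consequences of any fixed finite set of difference inequalities at $\alpha=0$. The paper instead has to (i) restrict the net to lines $n=l_1m+l_2$ with carefully chosen large slopes (Lemma~\ref{Ext-cm}), reducing to a one-variable quadratic whose reducibility over $\mathbb R$ is then forced by Lemma~\ref{freq-3}, and (ii) in the bi-degree $(2,1)$ case, exhibit an explicit point where the Bessel-series weight function is negative (the evaluation $J_0(2\sqrt 5)\approx -0.3268$). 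Neither step is a discriminant computation on $\triangle_1\triangle_2^2(1/p)$. For sufficiency, your statement that the Laplace--Bessel representation ``should yield a nonnegative density'' under the stated constraints is exactly the hard point, not a routine verification: the necessity argument shows that this same Bessel density genuinely goes negative when the constraints fail, so positivity is delicate. Moreover, for the bi-degree $(2,2)$ case the paper does not use a Bessel representation at all; it factors $p=p_1p_2-c_2$ with $p_1,p_2$ affine with nonnegative coefficients and $c_2\ge 0$ (this is where \eqref{Impeq} enters), and expands $1/(p_1p_2-c_2)=\sum_{k\ge 0}c_2^k/(p_1p_2)^{k+1}$ as a convergent series of joint completely monotone nets. (Also note your second displayed identity should read $\int_0^\infty e^{-sa}J_0(2\sqrt{sc})\,ds=\frac 1a e^{-c/a}$.) As it stands the proposal establishes the framing of the theorem but not its substance.
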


\subsection*{Plan of the paper}
In Section~2, we consider polynomial $p:\mathbb{R}_+^2 \rightarrow (0,\infty)$ of the form $p(x,y)=b_0(x+b_1)(x+b_2)+a_0(x+a_1)y,$ where $a_0, a_1, b_0, b_1, b_2 \in \mathbb{R},$  
 with $b_1\Le b_2$ and $a_0,a_1 \neq 0.$ We describe all polynomials $p$ for which $1/p$ is a joint completely monotone net (see Theorem \ref{bi-deg-2-1-second}). As a consequence of Theorem~\ref{bi-deg-2-1-second}, we obtain some necessary conditions  for the polynomial $q(x,y)=b_0(x+b_1)(x+b_2)+(a_1x+a_2)y,$ whose reciprocal is a joint completely monotone net (see Corollary~\ref{Imp-coro}). 
In Section~3, we consider the polynomial $p(x,y)=a(x)+b(x)y+y^2,$ where $a(x)=a_0(x+a_1)(x+a_2), b(x)=b_0(x+b_1)$, $a_0, a_1,a_2,b_0,b_1 \in \mathbb{R}$ with $a_1 \Le a_2.$ Under the assumption $p(m,n)>0,$ we characterize the joint complete monotonicity of $\{1/p(m,n)\}_{m,n \in \mathbb{Z}_+}$ (see Theorem \ref{MT}). Proof of this theorem is fairly long and requires several lemmas (see Lemmas \ref{pos-(2,2)}-\ref{Ext-cm}).
In Section~4, we provide a solution to the Cauchy dual subnormality problem for torally expansive toral $3$-isometric weighted $2$-shifts, which completes the proof of Theorem~\ref{CDSP-T3I}. Note that the proof of Theorem~\ref{CDSP-T3I} relies on Theorems~\ref{bi-deg-2-1-second} and \ref{MT} and a characterization of toral $3$-isometries (see Proposition~\ref{3-iso-wt-char}).

\section{A special case of bi-degree $(2,1)$}
In this section, we present a proof of Theorem~\ref{bi-deg-2-1-second}. The proof of the sufficiency part of this theorem is obtained in \cite[Theorem~3.6]{ACN}. Here, we obtain a proof of the necessity part.

Recall that for a positive real number $\nu,$ 
the {\it Bessel function $J_{\nu}(z)$ of the first kind of order $\nu$} is given by
\beqn
J_{\nu}(z)=\Big(\frac{z}{2}\Big)^{\nu} \sum_{k=0}^\infty \Big(\frac{-z^{2}}{4}\Big)^k\frac{1}{k!\Gamma(\nu+k+1)}, \quad z \in \mathbb C \setminus (-\infty, 0],
\eeqn
where $\Gamma$ denotes the Gamma function. 
 \begin{theorem}[Special case of bi-degree $(2, 1)$]  \label{bi-deg-2-1-second}
Let $p:\mathbb{R}_+^2 \rightarrow (0,\infty)$ be a polynomial given by $p(x,y)=b(x)+a(x)y,$ where $a(x)=a_0(x+a_1)$ and $b(x)=b_0(x+b_1)(x+b_2),a_0, a_1, b_0, b_1, b_2 \in \mathbb{R},$  
 with $b_1\Le b_2$ and $a_0,a_1 \neq 0.$ Then the net $\left\{ \frac{1}{p(m,n)}\right\}_{m,n \in \mathbb{Z}_+}$ is joint completely monotone if and only if $b_1 \Le a_1 \Le b_2.$
\end{theorem}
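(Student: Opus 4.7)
The statement is an if-and-only-if, and the sufficiency direction---that the interval condition $b_1\le a_1\le b_2$ implies joint complete monotonicity of $\{1/p(m,n)\}$---has already been established in \cite[Theorem~3.6]{ACN}, via an explicit Hausdorff moment representation built from Bessel-function integral identities (this is precisely what the Bessel-function recollection immediately before the theorem is signalling). My plan therefore quotes that half as given and concentrates on the new content, which is the necessity direction.

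For necessity, assume $\{1/p(m,n)\}_{m,n\in\mathbb Z_+}$ is joint completely monotone and let $\mu$ be its unique representing positive Radon measure on $[0,1]^2$. The first step is to pin down parameter signs from the positivity of $p$ on $\mathbb R_+^2$: the axis restrictions $p(x,0)=b_0(x+b_1)(x+b_2)$ and $p(0,y)=b_0b_1b_2+a_0a_1y$ together with $b_1\le b_2$ and $a_0,a_1\ne 0$ force $b_0,a_0>0$ and $b_1,b_2,a_1>0$. Set $A(m):=b_0(m+b_1)(m+b_2)$ and $B(m):=a_0(m+a_1)$, both strictly positive on $\mathbb Z_+$.

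Next, since $p(m,\cdot)$ is linear in $n$, the one-variable slice $n\mapsto 1/p(m,n)=1/(A(m)+B(m)n)$ is completely monotone with explicit Euler representing measure $B(m)^{-1}s^{A(m)/B(m)-1}\,ds$ on $(0,1]$. Matching this against $s\mapsto\int_0^1 t^m\,d\mu(t,s)$ for each $m$, and computing the $s$-marginal of $\mu$ directly from $\{1/p(0,n)\}_n$ (which has density $g(s)=(a_0a_1)^{-1}s^{c_0-1}$ with $c_0:=b_0b_1b_2/(a_0a_1)$), the uniqueness of representing measures together with Fubini produces a conditional probability measure $\mu_{1\mid s}$ on $[0,1]$ for a.e.\ $s\in(0,1)$ satisfying
\[
\int_0^1 t^m\,d\mu_{1\mid s}(t)\;=\;\frac{a_1}{m+a_1}\,s^{\phi(m)},\qquad m\in\mathbb Z_+,
\]
where $\phi(m):=A(m)/B(m)-c_0$. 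A short polynomial division gives $\phi(m)=(b_0/a_0)\,m(m+b_1+b_2-b_1b_2/a_1)/(m+a_1)$, and, more usefully,
\[
\phi'(m)\;=\;\frac{b_0}{a_0}\biggl(1-\frac{(a_1-b_1)(a_1-b_2)}{(m+a_1)^2}\biggr).
\]
Thus $\phi'$ is completely monotone on $[0,\infty)$ precisely when $(a_1-b_1)(a_1-b_2)\le 0$, i.e.\ when $b_1\le a_1\le b_2$; equivalently, $\phi$ is a Bernstein function exactly under the interval condition.

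The main obstacle is closing the gap between the CM of the product $a_1(m+a_1)^{-1}s^{\phi(m)}$ in $m$ for every $s\in(0,1)$ and the Bernstein-ness of $\phi$. I plan to handle this via multiplicative deconvolution: since $a_1/(m+a_1)$ is the $m$-th moment of the fixed Beta-type measure $a_1 t^{a_1-1}\,dt$ on $[0,1]$, the measure $\mu_{1\mid s}$ decomposes as the multiplicative convolution of that Beta measure with a (formal) measure $\nu_s$ whose $m$-th moment is $s^{\phi(m)}$; since the Mellin symbol $a_1/(u+a_1)$ never vanishes, $\nu_s$ is uniquely determined by its moments, and a continuity-in-$s$ argument (comparing $\mu_{1\mid s}\to a_1 t^{a_1-1}\,dt$ as $s\to 1^-$ with $\mu_{1\mid s}\to\delta_0$ as $s\to 0^+$) should force positivity of $\nu_s$. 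Once $\{s^{\phi(m)}\}_m$ is a Hausdorff moment sequence for every $s\in(0,1)$, equivalently $\{e^{-\lambda\phi(m)}\}$ is CM for every $\lambda>0$, the classical Bernstein/subordination theorem identifies $\phi$ as a Bernstein function, and the displayed formula for $\phi'$ then gives $(a_1-b_1)(a_1-b_2)\le 0$, completing the necessity proof.
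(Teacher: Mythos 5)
Your framing is right---sufficiency is quoted from \cite[Theorem~3.6]{ACN}, exactly as the paper does---and the first half of your necessity argument is sound: the sign analysis, the Euler representation of the $n$-slices, the identification of the $s$-marginal, the disintegration identity $\int_0^1 t^m\,d\mu_{1\mid s}(t)=\frac{a_1}{m+a_1}s^{\phi(m)}$ for a.e.\ $s$, and the computation showing that $\phi$ is Bernstein precisely when $b_1\le a_1\le b_2$ are all correct (that last step should be run with the discrete notion of completely alternating sequences from \cite{BCR1984} rather than with $\phi'$, since complete monotonicity of the sequences $\{e^{-\lambda\phi(m)}\}_m$ does not directly control the interpolating function; but this is cosmetic). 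The genuine gap is the step you yourself flag as the main obstacle: passing from complete monotonicity of $m\mapsto\frac{a_1}{m+a_1}s^{\phi(m)}$ to complete monotonicity of $m\mapsto s^{\phi(m)}$. Mellin deconvolution by a positive measure with non-vanishing Mellin symbol does \emph{not} preserve positivity, and your proposed mechanism---continuity in $s$ plus positivity of the limits as $s\to 0^+$ and $s\to 1^-$---cannot force positivity of $\nu_s$ for intermediate $s$; positivity of a family of signed measures is not detected by endpoint behaviour. A concrete counterexample to the deconvolution principle: let $u_m=\frac{1}{m+1}$ (moments of Lebesgue measure on $[0,1]$, Mellin symbol $\frac{1}{u+1}$ never vanishing) and $v_m=1-\varepsilon\int_{1/2}^1 t^m\,dt$ with $0<\varepsilon<1/\log 2$. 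Then $v$ is strictly increasing, hence not completely monotone, yet $u_mv_m$ is the Hausdorff moment sequence of the density $h(y)=1-\varepsilon\int_{\max(y,1/2)}^1 t^{-1}\,dt\ \ge\ 1-\varepsilon\log 2>0.$ Note moreover that, by reversing your own Fubini computation, the slice-wise property you derived is \emph{equivalent} to joint complete monotonicity of $\{1/p(m,n)\}$, so the missing step carries essentially the entire content of the necessity direction.

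For comparison, the paper's necessity proof bypasses slice-wise analysis altogether. It uses the explicit two-variable kernel $w(s,t)$ implicit in the proof of \cite[Theorem~3.6]{ACN}, which represents $1/p(m,n)$ as $\int_{[0,1]^2}t^{n}s^{m}\,w(s,t)\,ds\,dt$ for all admissible parameters; by uniqueness of representing measures, joint complete monotonicity would force $w\ge 0$ a.e.\ on $(0,1)^2$. When $a_1\notin[b_1,b_2]$, so that $c_2=\frac{b_0}{a_0}(a_1-b_1)(a_1-b_2)>0$, evaluating the series in $w$ at $t_0=1/2$ and $s_0=e^{-5/(c_2\log 2)}2^{-c_0}$ gives $\sum_{k\ge 0}(-5)^k/k!^2=J_0(2\sqrt{5})\approx -0.33<0$, hence $w<0$ on an open set, a contradiction. (Incidentally, this is what the Bessel-function recollection before the theorem is signalling: it serves the necessity part, not the sufficiency.) To salvage your structural approach you would need a genuinely new argument at the deconvolution step, one exploiting the specific form $s^{\phi(m)}$ of the second factor; otherwise the two-dimensional kernel route of the paper seems unavoidable.
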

\begin{proof}
Since range of $p$ is contained in $(0,\infty)$ and $a_0,a_1 \neq 0,$ an elementary checking shows that $a_0,a_1,a_2,b_0,b_1 >0$ (see discussion prior to \cite[Proposition~3.2]{ACN}).  It was implicitly recorded in the proof of \cite[Theorem~3.6]{ACN} that for $m,n \in \mathbb{Z}_+$
\beqn 
\frac{1}{p(m,n)} &=&  \int_{[0, 1]^2}t^{n} s^m_1 \, \frac{(s_1/t^{c_0})^{a_1-1}t^{c_0(b_1+b_2-a_1)-1}}{a_0t^{c_0}}\\ 
 && \sum_{k=0}^{\infty}\frac{(-c_0c_1\log {(s_1/t^{c_0})}\log t)^k}{k!^2}\mathbbm{1}_{[0,t^{c_0}]}(s_1)ds_1dt,
\eeqn
where $c_0=b_0/a_0>0$ and $c_1=(a_1-b_2)(a_1-b_1)$.
So the weight function for the net $\left\{\frac{1}{p(m,n)}\right\}_{(m,n)\in \mathbb{Z}_+^2}$ is
\beq \notag
w(s,t)=\frac{(s/t^{c_0})^{a_1-1}t^{c_0(b_1+b_2-a_1)-1}}{a_0t^{c_0}} \sum_{k=0}^{\infty}\frac{(-c_2\log {(s/t^{c_0})}\log t)^k}{k!^2}\mathbbm{1}_{[0,t^{c_0}]}(s),
\eeq 
where $s,t \in (0,1)$ and $c_2=c_0c_1.$ Sufficiency part follows from \cite[Theorem~3.6]{ACN}. To prove the necessity part, assume that $a_1 \notin [b_1,b_2].$ We will show that $w(s,t)<0$ on some open set contained in $(0,1)^2$. Since by the pasting lemma, $w(s,t)$ is continuous on $(0,1)^2,$ it only require to show that $w(s,t)<0$ for some $s,t \in (0,1)$. It now suffices to check that
\beq \notag
 \sum_{k=0}^{\infty}\frac{(-c_2\log {(s/t^{c_0})}\log t)^k}{k!^2}\mathbbm{1}_{[0,t^{c_0}]}(s) < 0,
\eeq
for some  $s,t \in (0,1)$. Observe that $c_2=c_0c_1>0$. Take $t_0=1/2$ and $s_0=\frac{e^{-\frac{5}{c_2\log(2)}}}{2^{c_0}}<\frac{1}{2^{c_0}}$.
It is easy to see that
\beqn 
 \sum_{k=0}^{\infty}\frac{(-c_2\log {(s_0/t_0^{c_0})}\log t_0)^k}{k!^2}\mathbbm{1}_{[0,{t_0}^{c_0}]}(s_0)= \sum_{k=0}^{\infty}\frac{(-5)^k}{k!^2} = J_{0}(2\sqrt{5}) \approx -0.3268,
\eeqn
where $ J_{0}(x) $ is the Bessel function of the first kind of order $ 0 $.
This, together with the continuity of $w(s,t)$ on $(0,1)^2,$ implies that  $\left\{\frac{1}{p(m,n)}\right\}_{m,n\in \mathbb{Z}_+}$ is not a joint completely monotone net. Therefore, we have $b_1 \Le a_1 \Le b_2.$ This completes the proof. 
\end{proof}

The following lemma is stated for frequent use (for a variant, see \cite[Lemma~3.1]{ACJS}).
\begin{lemma} \label{freq-3}
    Let $p$ be a polynomial of degree $2$ given by $p(x)=a+bx+cx^2,$ where $a,b,c\in \mathbb{R}$ such that $p(n)\neq 0,n \in \mathbb{Z}_+.$ Then the sequence $\{{1}/{p(n)}\}_{n \in \mathbb Z_+}$ is completely monotone if and only if $a,b,c$ are positive real numbers and $p$ is reducible over $\mathbb R.$  
\end{lemma}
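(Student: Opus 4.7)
The plan is to prove the two implications separately, in the spirit of the proof of Theorem~\ref{bi-deg-2-1-second}.

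For sufficiency, I would assume $a, b, c > 0$ and $p$ reducible. Then Vieta's formulas give $r_1 r_2 = a/c > 0$ and $r_1 + r_2 = -b/c < 0$, forcing both real roots of $p$ to be strictly negative; write $p(x) = c(x+\alpha)(x+\beta)$ with $\alpha, \beta > 0$. Combining partial fractions with $\int_0^1 t^{n+\alpha-1}\,dt = 1/(n+\alpha)$ (and $\int_0^1 t^{n+\alpha-1}(-\log t)\,dt = 1/(n+\alpha)^2$ if $\alpha = \beta$) yields $1/p(n) = \int_0^1 t^n w(t)\,dt$ with $w \Ge 0$ on $(0,1)$; equivalently, $\{1/p(n)\}$ is a Hausdorff moment sequence, hence completely monotone.

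For necessity, non-negativity of a CM sequence forces $p(n) > 0$ for every $n \in \mathbb{Z}_+$, which gives $a = p(0) > 0$ and, from $p(n)/n^2 \to c$, the leading coefficient $c > 0$. The central step is to rule out irreducibility of $p$. Assume for contradiction that $p(x) = c((x-\mu)^2 + \nu^2)$ with $\nu > 0$. By Remark~\ref{remark-cm-gen} the shifted sequence $\{1/p(n+n_0)\}$ is CM for every $n_0 \in \mathbb{Z}_+$, and choosing $n_0 > \mu$ pushes the roots $\mu - n_0 \pm i\nu$ of $p(\,\cdot\,+n_0)$ into the open left half-plane, so that the identity $\frac{1}{n-z} = \int_0^1 t^{n-z-1}\,dt$ holds for every $n \in \mathbb{Z}_+$. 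Applying complex partial fractions to $\frac{1}{c(n+n_0-\gamma)(n+n_0-\bar\gamma)}$ with $\gamma = \mu + i\nu$ and invoking this identity (exactly as in the proof of Theorem~\ref{bi-deg-2-1-second}) produces an explicit real-valued weight of the form $w(t) = -\frac{t^{n_0-\mu-1}}{c\nu}\sin(\nu \log t)$ satisfying $\frac{1}{p(n+n_0)} = \int_0^1 t^n w(t)\,dt$. Since $\sin(\nu \log t)$ oscillates in sign as $t \to 0^+$, $w$ is strictly negative on some subinterval of $(0,1)$; by the uniqueness of the Hausdorff moment representation on $[0,1]$ this contradicts the positivity of the representing measure of the CM sequence $\{1/p(n+n_0)\}$. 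Hence $p$ is reducible, and combining $p(0) > 0$ with the monotonicity $p(n+1) \Ge p(n)$ (forced by $1/p(n+1) \Le 1/p(n)$) rules out non-negative real roots, leaving $p(x) = c(x+\alpha)(x+\beta)$ with $\alpha, \beta > 0$; in particular $b = c(\alpha + \beta) > 0$.

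The main obstacle is the rigorous construction of the oscillating weight in the irreducible case; the shift-of-index trick via Remark~\ref{remark-cm-gen} is what makes the integral representation $\frac{1}{n-z} = \int_0^1 t^{n-z-1}\,dt$ uniformly valid (regardless of the sign of $\mu$) and is where all the technical care is required. The remainder of the argument reduces to elementary partial-fraction calculus and the uniqueness of the Hausdorff moment representation on $[0,1]$.
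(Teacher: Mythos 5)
Your sufficiency argument and your treatment of the irreducible case are correct, but they take a genuinely different route from the paper: where you build explicit integral representations and, in the irreducible case, exhibit the oscillating weight $-\frac{t^{n_0-\mu-1}}{c\nu}\sin(\nu\log t)$ after a shift (an argument very much in the spirit of the paper's proof of Theorem~\ref{bi-deg-2-1-second}), the paper instead simply invokes two external results: \cite[Theorem~1.5]{AC17}, which shows that complete monotonicity of $\{1/p(n)\}$ forces all roots of $p$ to lie in the open left half-plane $\{\Re z<0\}$, and \cite[Proposition~4.3]{AC2017} to exclude irreducibility. Your version is more self-contained; the paper's is shorter but leans on those citations. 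For sufficiency the paper just observes that $1/p(n)$ is a product of two completely monotone sequences, which is equivalent to your partial-fraction computation.

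There is, however, a genuine gap in your final step, where you deduce $b>0$. You claim that $p(0)>0$ together with the monotonicity $p(n+1)\Ge p(n)$ rules out nonnegative real roots. It does not: take $p(x)=(x-0.4)(x-0.6)=x^2-x+0.24$. Then $p(0)=0.24>0$, $p(1)=p(0)$, and $p(n+1)>p(n)$ for all $n\Ge 1$, so $p$ is nondecreasing on $\mathbb Z_+$ and $p(n)>0$ for every $n\in\mathbb Z_+$, yet both roots are positive and $b<0$. In general, monotonicity on the integers only forces the vertex $-b/(2c)$ to lie in $(-\infty,1/2]$, which is compatible with both roots sitting inside an interval $(k,k+1)$ free of integers. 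Excluding this configuration requires a real argument; note that your shift trick cannot detect it, since after shifting by $k+1$ the roots become negative and the shifted sequence genuinely \emph{is} completely monotone, so the failure of complete monotonicity lives entirely in the first few terms. One way to close the gap: if $k<r_1\Le r_2<k+1$, the partial-fraction weight $w(t)=\frac{t^{-r_1-1}-t^{-r_2-1}}{c(r_1-r_2)}$ is positive but has infinite mass near $t=0$, and comparing it (via uniqueness of the representing measure for the tail $\{1/p(n)\}_{n\Ge k+1}$, after multiplying by $t^{k+1}$) with the finite positive representing measure that complete monotonicity of the full sequence would provide yields $\int_0^1 t^k\,d\mu=+\infty$, a contradiction. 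Alternatively, cite \cite[Theorem~1.5]{AC17} as the paper does, which places all roots in $\{\Re z<0\}$ and disposes of this case and the complex case simultaneously.
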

\begin{proof}
To see the proof of the necessity part, assume that the sequence $\{{1}/{p(n)}\}_{n \in \mathbb Z_+}$ is completely monotone. Note that $p(n)>0, n\in \mathbb{Z}_+,$ and hence $a >0,$ $c>0.$ An application of \cite[Theorem~1.5]{AC17} shows that 
\beq \label{roots-l-half}
\mbox{the roots of $p$ lies in $\{z \in \mathbb C : \Re(z) < 0\}.$}
\eeq
Let, if possible, $p$ be irreducible. Since $p(0)>0,$ we must have $p(x)>0$ for all $x \in \mathbb{R}.$ 
An application of \cite[Propositions 4.3]{AC2017} together with \eqref{roots-l-half} shows that   $\{{1}/{p(n)}\}_{n \in \mathbb Z_+}$ is not completely monotone. This contradiction shows that  $p$ is reducible over $\mathbb R.$ Thus $p$ has negative real roots, say, $\alpha_1$ and $\alpha_2.$ Since $b=-c(\alpha_1+\alpha_2),$ $b$ is positive. For the proof of the sufficiency part, note that $\{{1}/{p(n)}\}_{n \in \mathbb Z_+}$ is product of two completely monotone sequence and hence the sequence $\{{1}/{p(n)}\}_{n \in \mathbb Z_+}$ is completely monotone.
\end{proof}

{ With Lemma~\ref{freq-3}, we can now obtain the following corollary. 
\begin{corollary}\label{Imp-coro}
    Let $q$ be a polynomial given by $q(x,y)=b(x)+a(x)y,$ where $b(x)=b_0(x+b_1)(x+b_2),$ $ a(x)=a_1x+a_2$, $a_1,a_2,b_0,b_1,b_2 \in \mathbb{R}$ with $b_1 \Le b_2$ such that $q(m,n)\neq 0, m,n \in \mathbb Z_+.$ 
    Then the following holds$:$
    \begin{enumerate}
        \item[$\mathrm{(i)}$] if $q(m,n)>0,m,n \in \mathbb{Z}_+,$ then $a_1,a_2\Ge0,$
        \item[$\mathrm{(ii)}$] if $\left\{\frac{1}{q(m,n)}\right\}_{m,n \in \mathbb Z_+}$ is a joint complete monotone net then $b_0,b_1,$ $b_2>0$ and $a_1,a_2\Ge 0.$  Moreover, $a_1$ and $a_2$ are zero or positive real numbers simultaneously.
    \end{enumerate}
\end{corollary}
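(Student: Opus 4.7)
For part (i), I apply elementary positivity arguments to $q(m,n) = b_0(m+b_1)(m+b_2) + (a_1 m + a_2)n$. Setting $m = 0$ gives $q(0,n) = b_0 b_1 b_2 + a_2 n > 0$ for all $n \in \mathbb{Z}_+$, and letting $n \to \infty$ forces $a_2 \Ge 0$. For each fixed $m$, the map $n \mapsto q(m,n)$ is affine with slope $a_1 m + a_2$, so positivity for all $n$ requires $a_1 m + a_2 \Ge 0$ for every $m \in \mathbb{Z}_+$; sending $m \to \infty$ then forces $a_1 \Ge 0$.

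For part (ii), joint complete monotonicity combined with the standing hypothesis $q(m,n) \neq 0$ gives $q(m,n) > 0$, so (i) already yields $a_1, a_2 \Ge 0$. To pin down the coefficients of $b$, I restrict to the $m$-axis: joint complete monotonicity implies separate complete monotonicity, so $\{1/q(m,0)\}_m = \{1/b(m)\}_m$ is completely monotone. Lemma~\ref{freq-3} applied to $b(x) = b_0 x^2 + b_0(b_1+b_2)x + b_0 b_1 b_2$ (which is reducible by construction) then forces all three coefficients to be strictly positive, yielding $b_0 > 0$, $b_1 b_2 > 0$, and $b_1 + b_2 > 0$, and hence $b_1, b_2 > 0$.

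The heart of the argument is the \emph{simultaneously} clause, which amounts to excluding the two mixed cases. If $a_1 = 0$ and $a_2 > 0$, I freeze $n$ and view $q(\cdot, n)$ as a quadratic in $m$; its discriminant $b_0^2(b_1 - b_2)^2 - 4 b_0 a_2 n$ becomes strictly negative once $n$ is large enough, so the quadratic is irreducible over $\mathbb{R}$ for such $n$. A second invocation of Lemma~\ref{freq-3} then contradicts the separate complete monotonicity of $\{1/q(m,n)\}_m$. The opposite mixed case $a_1 > 0$, $a_2 = 0$ is subtler because the analogous discriminant stays nonnegative for every $n$; here I plan to invoke the Hausdorff representation $1/q(m,n) = \int_{[0,1]^2} t^m s^n \, d\mu(t,s)$. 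Since $q(0,n) = b_0 b_1 b_2$ is independent of $n$, the second marginal of $\mu$ has all moments equal to the constant $1/(b_0 b_1 b_2)$, and uniqueness in the one-dimensional Hausdorff moment problem forces this marginal to be $\tfrac{1}{b_0 b_1 b_2}\,\delta_1$. Consequently $\mu$ is concentrated on $[0,1] \times \{1\}$, so $1/q(m,n)$ would be independent of $n$ for every $m$; but $1/q(1,n) = 1/(b_0(1+b_1)(1+b_2) + a_1 n)$ genuinely depends on $n$ when $a_1 > 0$, a contradiction. I expect this last moment-theoretic step---promoting constancy of a single row of moments to a support constraint on the two-dimensional representing measure---to be the main subtlety of the proof.
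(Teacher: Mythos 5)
Your proof is correct. Parts (i), the positivity of $b_0,b_1,b_2$ via Lemma~\ref{freq-3} applied to $q(\cdot,0)$, and the mixed case $a_1=0$, $a_2>0$ (irreducibility of $q(\cdot,n_0)$ in $m$ for large $n_0$, since the discriminant $b_0^2(b_1-b_2)^2-4b_0a_2n_0$ eventually turns negative) all coincide with the paper's argument. The genuine divergence is in the remaining mixed case $a_1>0$, $a_2=0$. The paper handles it by shifting the first variable: by Remark~\ref{remark-cm-gen} the net $\left\{1/q(m+k,n)\right\}_{m,n}$ is again joint completely monotone, and since $q(x+k,y)=b_0(x+k+b_1)(x+k+b_2)+a_1(x+k)y$ falls under Theorem~\ref{bi-deg-2-1-second} with the linear factor's root at $-k$, that theorem forces $b_1+k\Le k\Le b_2+k$, i.e.\ $b_1\Le 0$, contradicting $b_1>0$. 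You instead exploit the representing measure: constancy of the row $\{1/q(0,n)\}_{n}$ together with uniqueness in the one-variable Hausdorff problem pins the second marginal $\nu$ to $\tfrac{1}{b_0b_1b_2}\delta_1$ (indeed $\int(1-s)\,d\nu=0$ with $1-s\Ge 0$ gives $\nu([0,1))=0$), so $\mu$ lives on $[0,1]\times\{1\}$ and $1/q(m,n)$ would be independent of $n$, which fails at $m=1$. Your route is sound and more self-contained: it uses only the existence and uniqueness of the Hausdorff representing measure recalled in the introduction, whereas the paper's route leans on the necessity half of Theorem~\ref{bi-deg-2-1-second}, which ultimately rests on the Bessel-function negativity computation. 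What the paper's approach buys is brevity, since that theorem has already been established immediately beforehand.
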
 
\begin{proof} Assume that $q(m,n)>0, m,n \in \mathbb{Z}_+.$ Let if possible $a_2<0.$ Choose a large value $n_0\in \mathbb Z_+$ such that $q(0,n_0)<0.$ This contradicts the assumption. Hence $a_2 \Ge 0.$ A similar argument can be used to see $a_1\Ge0.$ This completes the proof of (i). Assume that the net $\left\{\frac{1}{q(m,n)}\right\}_{m,n \in \mathbb Z_+}$ is joint completely monotone. Thus, it is separate completely monotone. This implies  $\left\{\frac{1}{q(m,0)}\right\}_{m \in \mathbb Z_+}$ is a completely monotone sequence. It now follows from $q(m,0) \neq 0, m \in \mathbb Z_+,$ and Lemma~\ref{freq-3}, that $b_0,b_1,b_2>0.$ Note that $q(m,n)>0, m,n \in \mathbb{Z}_+.$ By (i), we have $a_1,a_2 \Ge 0.$ We now consider two cases here.
\begin{case}
  $a_1=0$    
\end{case}
Let if possible $a_2> 0.$ In this case for large values of $n_0\in \mathbb Z_+,$ $q(.,n_0)$ is irreducible which contradicts the complete monotonicity of $\left\{\frac{1}{q(m,n_0)}\right\}_{m\in \mathbb Z_+}.$ Hence $a_2=0.$

\begin{case}
    $a_1>0$
\end{case}  Let if possible $a_2=0.$  By Remark~\ref{remark-cm-gen},  for $k \in \mathbb{Z}_+,$  $\left\{ \frac{1}{q(m+k,n)}\right\}_{m,n \in \mathbb{Z}_+}$ is a joint completely monotone net. By Theorem~\ref{bi-deg-2-1-second}, $b_1+k \Le k \Le b_2 +k.$ This yields $b_1 \Le 0,$ which is a contradiction. Hence $a_2 > 0.$
This completes the proof.
\end{proof}
\section{A special case of bi-degree $(2,2)$}
 In this section, we consider a class of polynomials of bi-degree $(2,2)$ and characterize the joint complete monotonicity of their reciprocals.
 
 \begin{theorem}[Special case of bi-degree $(2,2)$] \label{MT}
Let $p$ be a polynomial given by $p(x,y)=a(x)+b(x)y+y^2,$ where $a(x)=a_0(x+a_1)(x+a_2), b(x)=b_0(x+b_1)$, $a_0, a_1,a_2,b_0,b_1 \in \mathbb{R}$ with $a_1 \Le a_2$. Assume that $p(m,n)>0$ for every $m,n \in \mathbb{Z}_+.$ Then $\left\{ \frac{1}{p(m,n)}\right\}_{m,n \in \mathbb{Z}_+}$  is a joint completely monotone net if and only if $a_1,a_2,b_0,b_1>0$, $b_0^2 \Ge 4a_0,$
\beq \label{Impeq} 
a_0(a_2-a_1)^2\Le b_0^2(b_1-a_1)(a_2-b_1).
\eeq 
\end{theorem}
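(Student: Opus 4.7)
The plan is to prove necessity and sufficiency separately. For necessity, I begin by extracting positivity of coefficients and the bound $b_0^2 \ge 4 a_0$ from the separate complete monotonicity implied by joint complete monotonicity (see Remark~\ref{remark-cm-gen}). Setting $n = 0$ makes $\{1/a(m)\}_m$ a completely monotone sequence, so Lemma~\ref{freq-3} forces $a_0, a_1, a_2 > 0$. For each fixed $m \in \mathbb{Z}_+$, the sequence $\{1/p(m, n)\}_n$ is completely monotone, and a second application of Lemma~\ref{freq-3} to the quadratic $n \mapsto p(m, n)$ yields $b(m) > 0$ (hence $b_0, b_1 > 0$ upon varying $m$) together with the pointwise discriminant inequality $b(m)^2 \ge 4 a(m)$; leading asymptotics in $m$ then force $b_0^2 \ge 4 a_0$.

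The main work lies in obtaining \eqref{Impeq}. My plan is first to extend the Hausdorff moment net to a jointly completely monotone function on $\mathbb{R}_+^2$ via uniqueness of the representing measure, identifying this extension with the rational function $1/p(x, y)$ (this is likely the content of Lemma~\ref{Ext-cm}). Then for every real $y_0 \ge 0$ the slice $x \mapsto 1/p(x, y_0)$ is completely monotone on $\mathbb{R}_+$, and the continuous-variable analogue of Lemma~\ref{freq-3} (invoked via \cite[Proposition~4.3]{AC2017} as in the proof of Lemma~\ref{freq-3}) forces $p(\cdot, y_0)$, as a quadratic in $x$, to have only non-positive real roots. Its $x$-discriminant
\[
R(y) = (b_0^2 - 4 a_0) y^2 + 2 a_0 b_0 (a_1 + a_2 - 2 b_1) y + a_0^2 (a_2 - a_1)^2
\]
must therefore be $\ge 0$ for every $y \ge 0$, and a direct expansion shows
\[
\operatorname{disc}(R) = 16 a_0^2 \bigl[\,a_0 (a_2 - a_1)^2 - b_0^2 (b_1 - a_1)(a_2 - b_1)\,\bigr],
\]
so \eqref{Impeq} is exactly the statement $\operatorname{disc}(R) \le 0$. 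The delicate point is that ``$R \ge 0$ on $[0, \infty)$'' is a priori weaker than $\operatorname{disc}(R) \le 0$; I expect to close this gap by exploiting additional consequences of joint (as opposed to separate) complete monotonicity --- for instance, the positivity of mixed forward differences $\triangle_1 \triangle_2 (1/p) \ge 0$ gives further algebraic inequalities on the coefficients, and combining these with the continuous reducibility condition on each slice should rule out real roots of $R$ anywhere in $\mathbb{R}$.

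For sufficiency, the strategy is cleanest in the equality case of \eqref{Impeq}, when the discriminant $\Delta(x) = b(x)^2 - 4 a(x)$ is the square of a linear polynomial. A direct calculation (matching constants via $\alpha + \beta = b_0$, $\alpha \beta = a_0$, and $\alpha a_1 + \beta a_2 = b_0 b_1$) then factors $p(x, y) = (y + \alpha x + \alpha a_1)(y + \beta x + \beta a_2)$ with $\alpha, \beta > 0$. The double-Laplace identity $1/(AB) = \int_0^\infty \int_0^\infty e^{-sA - tB} \, ds \, dt$ applied to these linear-in-$(x, y)$ factors immediately expresses $1/p$ as a positive superposition of joint CM exponentials, and hence joint CM. In the strict inequality case I would use the convergent expansion
\[
\frac{1}{p(x, y)} = \sum_{k = 0}^{\infty} \frac{\Delta(x)^k}{4^k (y + b(x)/2)^{2k + 2}},
\]
valid because $4(y + b(x)/2)^2 - \Delta(x) = 4 p(x, y) > 0$. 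Under \eqref{Impeq}, $\Delta(x) = A(x + \gamma)^2 + D$ with $A, D \ge 0$, and binomial expansion turns each summand into a positive combination of expressions of the form $(x + \gamma)^{2 j}/(y + b(x)/2)^{2 k + 2}$, whose joint complete monotonicity can be verified by a direct application of Theorem~\ref{bi-deg-2-1-second} after a translation aligning $\gamma$ with $b_1$.

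The main obstacle, in both directions, is the passage between one-variable reducibility of each slice of $p$ and the global algebraic condition \eqref{Impeq}. In the necessity direction this is the step from ``$R \ge 0$ on $[0, \infty)$'' to $\operatorname{disc}(R) \le 0$; in the sufficiency direction it is the verification that the $\sinh$-type expansion has joint CM coefficients. I expect this is precisely the work done by Lemmas~\ref{pos-(2,2)}--\ref{Ext-cm}.
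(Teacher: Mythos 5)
Your reduction of \eqref{Impeq} to $\operatorname{disc}(R)\Le 0$ for the $x$-discriminant $R(y)$ is algebraically correct, but the necessity argument has a genuine hole that you acknowledge and do not close. Horizontal slices only give $R(y)\Ge 0$ for $y\Ge 0$, and this really is weaker than $\operatorname{disc}(R)\Le 0$: since $R(0)=a_0^2(a_2-a_1)^2\Ge 0$ and the leading coefficient $b_0^2-4a_0$ is nonnegative, $R$ can have two negative real roots (this happens when $a_1+a_2>2b_1$), in which case every slice $p(\cdot,y_0)$, $y_0\Ge 0$, is reducible even though \eqref{Impeq} fails. No amount of information from horizontal and vertical slices will detect the sign of the constant $c_2=-\frac{a_0(a_2-a_1)^2-b_0^2(b_1-a_1)(a_2-b_1)}{b_0^2-4a_0}$. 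The missing idea --- and the actual content of Lemma~\ref{Ext-cm}, which is \emph{not} an extension-to-the-continuum statement but a restriction statement --- is that joint complete monotonicity forces $\bigl\{1/p(m,\alpha m+\beta)\bigr\}_m$ to be completely monotone along every line of \emph{positive} slope. The paper chooses the slope $l_1\approx N_0c_0$ and intercept so that $p(m+\alpha_0,l_1m+l_2)=(N_0^2-1)(c_0m+c_0\alpha_0+c_1)^2-c_2$, which is an irreducible quadratic in $m$ precisely when $c_2<0$, contradicting Lemma~\ref{freq-3}. Your "mixed forward differences" suggestion is a gesture toward this, but as written the necessity proof is incomplete.

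The sufficiency argument also fails in the strict-inequality case. The individual terms of your expansion, after the binomial step, have the form $(x+\gamma)^{2j}/(y+b(x)/2)^{2k+2}$ with $j\Ge 1$, and these are \emph{not} jointly completely monotone: for $x$ near $-\gamma$ and $y$ large the partial derivative in $x$ of $(x+\gamma)^2/(y+b(x)/2)^4$ is positive, so the function is increasing in $x$ there. A term-by-term verification therefore cannot work, and Theorem~\ref{bi-deg-2-1-second} does not apply to such quotients (its numerator is $1$, not a polynomial). The paper avoids this by grouping differently: it writes $p=p_1p_2-c_2$ with $p_1,p_2$ linear forms in $(x,y)$ having positive coefficients (positivity of the constant terms $b_0b_1/2\pm c_1$ requires a short separate argument) and $c_2\Ge 0$ by \eqref{Impeq}, then expands $\frac{1}{p_1p_2-c_2}=\sum_{k\Ge 0}\frac{c_2^k}{(p_1p_2)^{k+1}}$; here every term is a product of reciprocals of positive linear forms, hence manifestly jointly completely monotone, and one passes to the limit. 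Your factorization in the equality case is essentially this decomposition with $c_2=0$ and is fine; it is the $c_2>0$ case where your series has the wrong terms.
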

 
The following lemma plays an important role in solving CDSP for torally expansive toral $3$-isometric weighted $2$-shifts.
\begin{lemma} \label{pos-(2,2)}
    Let $p$ be a polynomial given by $p(x,y)=a(x)+b(x)y+y^2,$ where $a(x)=a_0(x+a_1)(x+a_2),$ $ b(x)=b_1x+b_2$, $a_0,a_1,a_2,b_1,b_2 \in \mathbb{R}$ with $a_1 \Le a_2$ such that $p(m,n)\neq 0, m,n \in \mathbb Z_+.$ Assume that $\left\{\frac{1}{p(m,n)}\right\}_{m,n \in \mathbb Z_+}$ is a joint complete monotone net. Then $a_0,a_1,a_2, b_1,b_2> 0.$
\end{lemma}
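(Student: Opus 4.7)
The strategy is to exploit that joint complete monotonicity implies separate complete monotonicity (see Remark~\ref{remark-cm-gen}), and to apply the one-variable characterization of Lemma~\ref{freq-3} to suitable one-parameter slices of $p$.

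First I would fix $n = 0$. Then
\[
p(m, 0) = a_0(m+a_1)(m+a_2) = a_0 m^2 + a_0(a_1+a_2) m + a_0 a_1 a_2,
\]
and $\{1/p(m,0)\}_{m \in \mathbb{Z}_+}$ is completely monotone. Lemma~\ref{freq-3} forces all three coefficients to be strictly positive. Combined with $a_1 \le a_2$, positivity of both $a_1 a_2$ and $a_1 + a_2$ rules out $a_1 \le 0$, so $a_0, a_1, a_2 > 0$. Fixing $m = 0$ instead gives $p(0, n) = n^2 + b_2 n + a_0 a_1 a_2$, to which the same argument applied to $\{1/p(0, n)\}_{n \in \mathbb{Z}_+}$ yields $b_2 > 0$.

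The substantive part is to establish $b_1 > 0$. For each fixed $n \in \mathbb{Z}_+$, the sequence $\{1/p(m, n)\}_{m \in \mathbb{Z}_+}$ is completely monotone, so Lemma~\ref{freq-3} applies to the degree-$2$ polynomial in $m$
\[
p(m, n) = a_0 m^2 + \bigl(a_0(a_1+a_2) + b_1 n\bigr) m + \bigl(a_0 a_1 a_2 + b_2 n + n^2\bigr),
\]
demanding \emph{both} positivity of all three coefficients and reducibility of $p(\cdot, n)$ over $\mathbb{R}$, simultaneously for every $n \in \mathbb{Z}_+$. Positivity of the middle coefficient $a_0(a_1+a_2) + b_1 n$ for all $n \in \mathbb{Z}_+$ rules out $b_1 < 0$, giving $b_1 \ge 0$. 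Reducibility is equivalent to nonnegativity of the discriminant (in $m$), which after expansion is a quadratic polynomial in $n$ with leading coefficient $b_1^2 - 4 a_0$. If one had $b_1^2 < 4 a_0$, this discriminant would tend to $-\infty$ as $n \to \infty$, contradicting nonnegativity for all $n \in \mathbb{Z}_+$. Hence $b_1^2 \ge 4 a_0 > 0$, in particular $b_1 \ne 0$, and together with $b_1 \ge 0$ this yields $b_1 > 0$.

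The main obstacle I anticipate is exactly the passage from $b_1 \ge 0$ to the strict inequality $b_1 > 0$: coefficient positivity alone only gives the weak inequality, and eliminating $b_1 = 0$ requires invoking the reducibility half of Lemma~\ref{freq-3} together with the asymptotic analysis in $n$ of the discriminant of $p(\cdot, n)$.
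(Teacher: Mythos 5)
Your proof is correct and follows essentially the same approach as the paper: both reduce to one-variable slices via separate complete monotonicity and then invoke Lemma~\ref{freq-3}, using coefficient positivity to rule out $b_1<0$ and reducibility (discriminant asymptotics) to rule out $b_1=0$. The only difference is that for $b_1$ the paper slices in the $y$-variable (fixing $m=m_0$ large and examining $p(m_0,\cdot)$), whereas you slice in the $x$-variable (fixing $n$ large and examining $p(\cdot,n)$), which as a byproduct also yields $b_1^2 \ge 4a_0$.
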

\begin{proof}
  A similar argument as used in the proof of Corollary~\ref{Imp-coro} shows that $a_0,a_1,a_2>0.$ By symmetry, one can see that $b_2>0.$ We now consider the following cases.
    \begin{case}
    $b_1=0$
    \end{case}
    Note that for large values of $m_0\in \mathbb{Z}_+,$  $p(m_0,.)$ is irreducible and in view of Lemma~\ref{freq-3}, this contradicts the complete  monotonicity of  $\left\{\frac{1}{p(m_0,n)}\right\}_{n \in \mathbb Z_+}.$
\begin{case}
    $b_1<0$
\end{case}
     Choose  $m_0 \in \mathbb{Z}_+$ such that $b_1m_0+b_2<0.$ Since $\left\{\frac{1}{p(m_0,n)}\right\}_{n \in \mathbb Z_+}$ is a complete monotone sequence, this contradicts Lemma~\ref{freq-3}.
    
    Hence $b_1>0.$ This completes the proof.
\end{proof}}
The following lemma provides necessary conditions for a class of polynomials in two variables whose reciprocal is joint completely monotone.   
\begin{lemma} \label{prop-n-condition}
Let $p$ be a polynomial in two variables given by  $p(x,y)=q(x)+r(x)y+s(x)y^2,$ where $q,r$ and $s$ are polynomials in one variable. Assume that $ p(m,n) \neq 0$ for every $m,n \in \mathbb{Z}_+.$ If the net $\{1/p(m,n)\}_{m,n\in \mathbb{Z}_+}$ is joint completely monotone, then  
\beq \label{p1-sqrt}
&& 4q(m)s(m) \Le r^2(m), \quad m \in \mathbb{Z}_+, \\
&& \deg (q)+\deg (s) \Le 2 \deg (r). \label{inq-new}
\eeq 
\end{lemma}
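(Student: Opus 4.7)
The plan is to obtain both inequalities by fixing the variable $m$ and applying the one-dimensional criterion Lemma~\ref{freq-3} to the resulting quadratic in $n,$ and then to pass to asymptotics to extract the degree comparison.

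\textbf{Step 1 (pointwise discriminant inequality).} Fix $m \in \mathbb Z_+.$ Since joint complete monotonicity implies separate complete monotonicity, the sequence $\{1/p(m,n)\}_{n \in \mathbb Z_+}$ is completely monotone. If $s(m)=0,$ then $4q(m)s(m)=0\Le r(m)^2$ trivially. Otherwise $n\mapsto p(m,n)=q(m)+r(m)n+s(m)n^2$ is genuinely a polynomial of degree~$2$ in $n,$ so Lemma~\ref{freq-3} applies and yields simultaneously that $q(m),r(m),s(m)>0$ and that $q(m)+r(m)y+s(m)y^2$ is reducible over $\mathbb R.$ Reducibility is equivalent to nonnegativity of the discriminant, giving $r(m)^2\Ge 4q(m)s(m),$ which is \eqref{p1-sqrt}.

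\textbf{Step 2 (asymptotic degree comparison).} Write $d_q,d_r,d_s$ and $c_q,c_r,c_s$ for the degrees and leading coefficients of $q,r,s.$ Putting aside the degenerate cases where one of $q,r,s$ is identically zero (treated below), for all sufficiently large $m$ each of $q(m),r(m),s(m)$ is nonzero. Step~1 then yields $c_q,c_r,c_s>0$ and, as $m\to\infty,$
\beqn
4c_q c_s\, m^{d_q+d_s}\bigl(1+o(1)\bigr)= 4q(m)s(m)\Le r(m)^2 = c_r^2\, m^{2d_r}\bigl(1+o(1)\bigr).
\eeqn
If $d_q+d_s>2d_r,$ dividing by $m^{2d_r}$ and letting $m\to\infty$ would force the left side to $+\infty$ while the right side stays bounded, a contradiction. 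Hence $d_q+d_s\Le 2d_r,$ which is \eqref{inq-new}.

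\textbf{Degenerate subcases.} If $q\equiv 0$ or $s\equiv 0,$ then \eqref{inq-new} holds under the usual convention $\deg 0=-\infty.$ If $r\equiv 0$ but $q,s$ are both nonzero, Step~1 gives $q(m)s(m)\Le 0$ for every $m\in\mathbb Z_+,$ while for any $m$ with $s(m)\neq 0$ Lemma~\ref{freq-3} forces $q(m),s(m)>0,$ a contradiction; so this configuration cannot occur.

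The argument involves no real conceptual obstacle: Lemma~\ref{freq-3} gives the discriminant bound at once, and the degree inequality falls out of an elementary leading-term balance. The only subtlety is the bookkeeping needed to ensure that Lemma~\ref{freq-3} is always applied to a genuine degree-$2$ polynomial, which is why the degenerate subcases where some $q(m), r(m),$ or $s(m)$ vanishes must be isolated and disposed of separately.
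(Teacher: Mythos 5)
Your proof is correct and follows essentially the same route as the paper: fix $m$, apply Lemma~\ref{freq-3} to the quadratic in the second variable to get reducibility and hence the discriminant inequality \eqref{p1-sqrt}, then deduce \eqref{inq-new} from it by comparing degrees. The paper compresses the second step into the single remark that \eqref{p1-sqrt} yields \eqref{inq-new}; your leading-term asymptotics and treatment of the degenerate cases simply make that implication explicit.
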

\begin{proof} Assume that $\left\{\frac{1}{p(m,n)}\right\}_{m,n\in \mathbb{Z}_+}$ is joint completely monotone. 
As noted earlier, $\left\{\frac{1}{p(m,n)}\right\}_{m,n\in \mathbb{Z}_+}$ is  separate completely monotone. Therefore, by Lemma~\ref{freq-3},  for any $m \in \mathbb{Z}_+,$  the roots of $p(m,.)$ are real numbers. Thus, we can apply the formula for the roots of a quadratic equation to obtain \eqref{p1-sqrt}. Note that \eqref{p1-sqrt} yields \eqref{inq-new}.
\end{proof}
 We need the following in the proof of the necessity part of  Theorem~\ref{MT}.
\begin{lemma}  \label{Ext-cm}
  Let $p$ be a polynomial in two variables given by $p(x,y)=a(x)+b(x)y+y^2,$ where $a$ and $b$ are polynomials in one variable. Assume that $ p(m,n) \neq 0$ and $b^2(m)\neq 4a(m)$ for every $m,n\in \mathbb{Z}_+$. Let $\left\{\frac{1}{p(m,n)}\right\}_{m,n \in \mathbb{Z}_+}$ be a joint completely monotone net.  Then, for any positive real numbers $\alpha$ and $\beta,$ $\left\{\frac{1}{p(m,\alpha m +\beta)}\right\}_{m \in \mathbb{Z}_+}$ is a completely monotone sequence.
\end{lemma}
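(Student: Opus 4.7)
The plan is to combine the joint Hausdorff representation of $\{1/p(m,n)\}_{m,n}$ on $[0,1]^2$ with an explicit description of the slicewise one-variable representing measures, in order to upgrade the exponent in the second coordinate to real values of the form $\alpha m+\beta$, and then to recognise the resulting quantity as a one-variable Hausdorff moment sequence through a pushforward.

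First, joint complete monotonicity yields a positive Radon measure $\mu$ on $[0,1]^2$ with $1/p(m,n)=\int_{[0,1]^2}s^m t^n\,d\mu(s,t)$ for all $m,n\in\mathbb{Z}_+$. Fix $m\in\mathbb{Z}_+$. The sequence $\{1/p(m,n)\}_n$ is completely monotone, $p(m,n)\neq 0$, and $b(m)^2\neq 4a(m)$; Lemma~\ref{freq-3} applied to $p(m,\cdot)=y^2+b(m)y+a(m)$ then gives $a(m),b(m)>0$ together with two distinct negative roots $-\gamma_1(m),-\gamma_2(m)$. A partial-fraction decomposition of $1/p(m,y)$, combined with the elementary identity $1/(y+\gamma)=\int_0^1 t^{y+\gamma-1}\,dt$ (valid for $y\geq 0$ and $\gamma>0$), produces a positive density
\[
\frac{d\sigma_m}{dt}(t)\;=\;\frac{t^{\gamma_1(m)-1}-t^{\gamma_2(m)-1}}{\gamma_2(m)-\gamma_1(m)}
\]
on $(0,1]$ satisfying $1/p(m,y)=\int_0^1 t^y\,d\sigma_m(t)$ for every real $y\geq 0$. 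On the other hand, Fubini produces a measure $\mu_m$ on $[0,1]$ defined by $\int f\,d\mu_m=\int_{[0,1]^2} s^m f(t)\,d\mu(s,t)$, whose integer moments also equal $1/p(m,n)$; uniqueness of the representing measure in the one-dimensional Hausdorff moment problem forces $\mu_m=\sigma_m$. Consequently
\[
\int_{[0,1]^2} s^m t^y\,d\mu(s,t)\;=\;\int_0^1 t^y\,d\mu_m(t)\;=\;\frac{1}{p(m,y)},\qquad y\geq 0.
\]

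Specialising $y=\alpha m+\beta\geq 0$ and factoring $s^m t^{\alpha m+\beta}=(st^\alpha)^m\, t^\beta$ gives
\[
\frac{1}{p(m,\alpha m+\beta)}\;=\;\int_{[0,1]^2}(st^\alpha)^m\, t^\beta\,d\mu(s,t)\;=\;\int_{[0,1]} u^m\,d\nu(u),
\]
where $\nu$ is the pushforward of the finite positive measure $t^\beta\,d\mu(s,t)$ under the continuous map $(s,t)\mapsto st^\alpha$ from $[0,1]^2$ into $[0,1]$ (one uses $\alpha>0$ so that $st^\alpha\in[0,1]$). The last display exhibits $\{1/p(m,\alpha m+\beta)\}_{m\in\mathbb{Z}_+}$ as a one-variable Hausdorff moment sequence, hence as a completely monotone sequence, which is what is required.

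The main obstacle is the middle step, upgrading the representation from integer to real values of the second exponent. This is exactly where the hypothesis $b(m)^2\neq 4a(m)$ enters: distinctness of the roots of $p(m,\cdot)$ makes the explicit partial-fraction density $d\sigma_m$ available, and uniqueness of the one-dimensional Hausdorff moment measure then clinches the identification $\mu_m=\sigma_m$ needed to interpolate across non-integer exponents.
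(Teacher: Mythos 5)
Your argument is correct, and while it rests on the same key computation as the paper's proof, the final mechanism is genuinely different. The shared core is the interpolation step: both proofs use Lemma~\ref{freq-3} together with the hypothesis $b^2(m)\neq 4a(m)$ to factor $p(m,\cdot)$ with distinct negative roots and obtain the explicit density $w_m(t)=\frac{t^{r_1(m)-1}-t^{r_2(m)-1}}{r_2(m)-r_1(m)}$ satisfying $1/p(m,y)=\int_0^1 t^{y}w_m(t)\,dt$ for all real $y\Ge 0$, which is what allows the second exponent to leave the integers. Where you diverge is in handling the $m$-dependence afterwards. The paper works at the level of difference operators: it argues that for each fixed $t$ the sequence $\{w_m(t)\}_{m}$ is completely monotone in $m$, multiplies by the completely monotone sequence $\{t^{\alpha m+\beta}\}_{m}$ using the product stability of complete monotonicity (\cite[Lemma 8.2.1(v)]{BCR1984}), and integrates in $t$. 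You instead bring in the two-dimensional representing measure $\mu$ of the joint completely monotone net, identify its $s^m$-weighted $t$-marginal with $w_m(t)\,dt$ via uniqueness of the one-dimensional Hausdorff measure, and then exhibit $1/p(m,\alpha m+\beta)$ as the $m$-th moment of the pushforward of $t^{\beta}\,d\mu(s,t)$ under $(s,t)\mapsto st^{\alpha}$. Your route yields an explicit representing measure on $[0,1]$ for the diagonal sequence and sidesteps the paper's rather terse pointwise claim that $(-1)^{i}\triangle_1^{i}w_m(t)\Ge 0$ for every $t$ (which itself needs a uniqueness-of-representing-measure argument); the paper's route avoids invoking the two-variable measure altogether. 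Both are complete proofs.
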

 \begin{proof}
By Lemma~\ref{prop-n-condition} and the assumption that $b^2(m)\neq 4a(m)$ for every $m \in \mathbb{Z}_+,$
$$b^2(m)-4a(m)> 0, \quad m \in \mathbb Z_+.$$
    Also, for $m\in \mathbb Z_+$ and $y \in \mathbb R_+,$   $$p(m,y)=(y+r_1(m))(y+r_2(m)),$$ where $r_1$ and $r_2$ are given by 
    \beqn
    r_1(m)=\frac{b(m)+\sqrt{b^2(m)-4a(m)}}{2}, \quad r_2(m)=\frac{b(m)-\sqrt{b^2(m)-4a(m)}}{2}.
    \eeqn
    Note that for every $m \in \mathbb Z_+$ and $y \in \mathbb R_+,$
    \beqn
    \frac{1}{p(m,y)}&=&\frac{1}{(y+r_1(m))(y+r_2(m))} \\
    &=&\frac{1}{r_2(m)-r_1(m)}\left(\frac{1}{y+r_1(m)}-\frac{1}{y+r_2(m)}\right)\\
    &=&\int_0^1t^y\left(\frac{t^{r_1(m)-1}-t^{r_2(m)-1}}{r_2(m)-r_1(m)}\right)dt.
    \eeqn
    Therefore,
    \beq\label{t-w-int}
    \frac{1}{p(m,y)}=\int_0^1t^yw_m(t)dt, \quad m\in \mathbb Z_+,\,y \in \mathbb R_+,
    \eeq
    where $w_m$ is given by
    \beqn
    w_m(t)=\frac{t^{r_1(m)-1}-t^{r_2(m)-1}}{r_2(m)-r_1(m)}, \quad t \in (0,1).
    \eeqn
    Since the net $\left\{\frac{1}{p(m,n)}\right\}_{m,n \in  \mathbb Z_+}$ is joint completely monotone, by \eqref{t-w-int}
    \beqn
    (-1)^{i}\triangle_1^{i}\frac{1}{p(m,n)}=\int_{[0, 1]}t^n(-1)^{i}\triangle_1^{i}w_m(t)dt \Ge 0, \quad i,m \in \mathbb Z_+.
    \eeqn
    This implies for every $i,m \in \mathbb Z_+$ and $t \in (0,1),$
    $$(-1)^{i}\triangle_1^{i}w_m(t) \Ge 0.$$
    Therefore, for each $t\in (0,1),$ $\{w_{m}(t)\}_{m\in \mathbb Z_+}$
    is completely monotone.
    Let $\alpha$ and $\beta$ be positive real numbers. Note that for every $t\in (0,1),$ $\{t^{\alpha m+\beta}\}_{m\in \mathbb Z_+}$ is a completely monotone sequence. By \cite[Lemma 8.2.1(v)]{BCR1984} , for every $t \in (0,1),$ we have 
    \beqn
    (-1)^{i}\triangle ^{i}t^{\alpha m +\beta}w_m(t) \Ge 0, \quad m \in \mathbb Z_+.
    \eeqn
    This combined with \eqref{t-w-int}, yields
    \beqn
    (-1)^{i}\triangle^{i}\frac{1}{p(m,\alpha m+\beta)}&=&\int_0^1(-1)^{i}\triangle^{i}t^{\alpha m +\beta}w_m(t)dt \Ge 0, \quad m \in \mathbb Z_+.
    \eeqn
    This shows that $\left\{\frac{1}{p{(m,\alpha m + \beta)}}\right\}_{m\in \mathbb{Z}_+}$ is a completely monotone sequence. 
\end{proof}

\begin{proof}[Proof of Theorem~\ref{MT}] Assume that the net 
$\left\{\frac{1}{p(m,n)}\right\}_{m,n \in \mathbb{Z}_+}$ is joint completely monotone. 
A routine calculation shows that 
\beq \label{b-square-4a}
&& b^2(m)-4a(m) \\ \notag 
&=& (b_0^2-4a_0)m^2+(2b_0^2b_1-4a_0(a_1+a_2))m+b_0^2b_1^2-4a_0a_1a_2,
\eeq
which, by \eqref{p1-sqrt} (applied to $q=a$, $r=b$ and $s=1$), is nonnegative for every $m \in \mathbb Z_+$. 
It follows that $b_0^2-4a_0 \Ge 0$ and $b_0^2b_1^2-4a_0a_1 \Ge 0.$ By  Lemma~\ref{pos-(2,2)},  $a_0,a_1,a_2, b_0,b_1> 0.$
 
Before we prove the necessity part, 
consider the polynomials given by
\beq \label{f-g}
f(m)= \frac{b(m)}{2}, ~g(m)=\frac{\sqrt{b^2(m)-4a(m)}}{2}, \quad m \in \mathbb Z_+, 
\eeq
($g$ is real-valued since $b^2 \Ge 4a$) 
and note that 
\beq \label{1byp-f-g}
\frac{1}{p(m,n)}=\frac{1}{(n+f(m))^2-g^2(m)}, \quad m,n \in \mathbb{Z}_+.
\eeq
We will divide the verification of \eqref{Impeq} into the following cases.
\begin{case}
$\deg b^2-4a \Le 1$
\end{case}
If $\deg b^2-4a  =0,$ then by \eqref{b-square-4a}, $b_0^2=4a_0$ and $2b_1=a_1+a_2$, and hence \eqref{Impeq} holds. 
If possible, then assume that $b^2(m)-4a(m)$ is a linear polynomial. By \eqref{b-square-4a}, 
$b_0^2=4a_0,$
and hence for every $m \in \mathbb Z_+,$
\beq \label{b-0-square}
b^2(m)-4a(m)=b_0^2\big(2b_1-(a_1+a_2)\big)m+b_0^2(b_1^2-a_1a_2)=c_0m+c_1,
\eeq
where $c_0=b_0^2\big(2b_1-(a_1+a_2)\big)$ and $c_1=b_0^2(b_1^2-a_1a_2).$ Since $b^2(m)-4a(m)$ is a nonnegative linear polynomial (see \eqref{p1-sqrt}), we have 
\beq \label{a1plusa2}
a_1+a_2 < 2b_1.
\eeq 
A routine calculation using \eqref{f-g} and \eqref{b-0-square} shows that for $m,n \in \mathbb{Z}_+,$
\beqn
 p(m, n) &\overset{\eqref{1byp-f-g}}=&
 (n+f(m))^2-g^2(m) \\
 &=& \Big(\frac{b_0m}
 {2}+
 \frac{\frac{b_0^2b_1}{2}+b_0n-\frac{c_0}{4} }{b_0}\Big)^2
 +b_0\Big(b_1-\frac{(a_1+a_2)}{2}\Big)n \\ &+& \frac{b_0^2b_1^2}{4}
 -\frac{\Big(\frac{b_0^2b_1^2}{2}-\frac{c_0}{4}\Big)^2}{b_0^2}.
 \eeqn
    Since $a_1+a_2 < 2b_1$ (see \eqref{a1plusa2}) and  $b_0 >0,$ we note that there exists $n_0 \in \mathbb Z_+$ such that  
    \beqn
   \frac{\Big(\frac{b_0^2b_1^2}{2}-\frac{c_0}{4}\Big)^2}{b_0^2} - \frac{b_0^2b_1^2}{4} < b_0\Big(b_1-\frac{(a_1+a_2)}{2}\Big)n, \quad n \Ge n_0.
 \eeqn 
It follows that  $p(m,n_0)$ is irreducible in $m$, and hence $\left\{\frac{1}{p(m,n)}\right\}_{(m,n)\in \mathbb{Z}_+^2}$ is not separate completely monotone. Hence $\deg b^2-4a  =0,$ which completes the proof in this case. 

\begin{case}
$b^2(x)-4a(x)$ is a quadratic polynomial 
\end{case}
Note that by \eqref{p1-sqrt} and \eqref{b-square-4a}, $b_0^2-4a_0 > 0.$ It is easy to see using \eqref{f-g} that for every $m \in \mathbb{Z}_+,$ 
 \beqn 
 g^2(m)  &=&(c_0m+c_1)^2+c_2,
  \eeqn
  where $c_0, c_1, c_2$ are given by 
  \beq \label{c2-inq} \notag
  c_0&=&\frac{\sqrt{b_0^2-4a_0}}{2}, \quad \notag
 c_1= \frac{b_0^2b_1-2a_0(a_1+a_2)}{2\sqrt{b_0^2-4a_0}},\\
    c_2&=&-\frac{a_0(a_2-a_1)^2-b_0^2(b_1-a_1)(a_2-b_1)}{b_0^2-4a_0}.
  \eeq

Now, we choose a very large $\alpha_0 \in \mathbb{Z}_+ $   such that $c_0 \alpha_0 +c_1 >0$ and $b^2(m+\alpha_0) \neq 4a(m+\alpha_0)$ for every $m\in \mathbb{Z}_+.$
 We also choose a very large natural number, say $N_0>1,$ such that
   \beq\label{l_1-l_2}
   l_1:=N_0c_0-\frac{b_0}{2}> 0, \quad l_2:=N_0(c_0\alpha_0+c_1)-\frac{b_0\alpha_0}{2}-\frac{b_0b_1}{2}>0.
   \eeq
   Take $n=l_1m+l_2$ and consider 
     \beqn
   && \frac{1}{p(m+\alpha_0,l_1m+l_2)}\\&\overset{\eqref{1byp-f-g}}=& \frac{1}{(l_1m+l_2+f(m+\alpha_0))^2-g^2(m+\alpha_0)}\\&\overset{\eqref{f-g},\eqref{l_1-l_2}}=&\frac{1}{(N_0c_0m+N_0(c_0\alpha_0+c_1))^2-(c_0m+c_0\alpha_0+c_1)^2-c_2}\\
     &=&\frac{1}{(N_0^2-1)(c_0m+c_0\alpha_0+c_1)^2-c_2}.
     \eeqn
   Assume that \eqref{Impeq} does not hold. By \eqref{c2-inq}, we obtain $c_2<0.$ Therefore, the polynomial $(N_0^2-1)(c_0m+c_0 \alpha_0+c_1)^2-c_2$ is irreducible in $m.$ One may see, using Lemma~\ref{freq-3} that the sequence $\left\{\frac{1}{p(m+ \alpha_0, l_1m+l_2)}\right\}_{m \in \mathbb{Z}_+}$ is not completely monotone. This is not possible in view of Lemma~\ref{Ext-cm} and Remark~\ref{remark-cm-gen}.
     
    This completes the proof of the necessity part.
    
We will divide the verification of the sufficiency part into several cases.
\begin{casen}
$b^2(m)-4a(m)$ is a constant 
\end{casen}
By \eqref{b-square-4a}, we have $2b_1=a_1+a_2$ and $b_0^2 = 4a_0.$ This implies  for every $m\in \mathbb Z_+,$ 
 \beqn
 b^2(m)-4a(m)\overset{\eqref{b-square-4a}}=b_0^2b_1^2-4a_0a_1a_2  = a_0(a_1-a_2)^2 \Ge 0.
 \eeqn
It follows that for $m,n\in \mathbb{Z}_+,$  
\begin{multline}
p(m,n) \overset{\eqref{1byp-f-g}}= \\ \notag \Big(n+\frac{b_0}{2}m+\frac{b_0b_1}{2}+\frac{\sqrt{b_0^2b_1^2-4a_0a_1a_2}}{2}\Big)\Big(n+\frac{b_0}{2}m+\frac{b_0b_1}{2}-\frac{\sqrt{b_0^2b_1^2-4a_0a_1a_2}}{2}\Big).
\end{multline}
Clearly, $\frac{b_0b_1}{2}+\frac{\sqrt{b_0^2b_1^2-4a_0a_1a_2}}{2} \Ge 0$ and $\frac{b_0b_1}{2}-\frac{\sqrt{b_0^2b_1^2-4a_0a_1a_2}}{2} \Ge 0.$ In this case, $\{1/p(m,n)\}_{m,n\in \mathbb{Z}_+}$ is a joint completely monotone net since it is the product of two joint completely monotone net (see \cite[Lemma 8.2.1(v)]{BCR1984}). 

\begin{casen}
$b^2(m)-4a(m)$ is a linear polynomial
\end{casen}
 Note that from \eqref{Impeq},
    \beqn
    a_0(a_2-a_1)^2  \Le  b_0^2(b_1-a_1)(a_2-b_1).
    \eeqn
Since $b^2(m)-4a(m)$ is a linear polynomial, we have $4a_0=b_0^2,$ and hence      
    \beqn
    (a_2-b_1+b_1-a_1)^2  \Le  4(b_1-a_1)(a_2-b_1).
    \eeqn
    It now follows that
    \beqn 
    (a_2-b_1)^2+(b_1-a_1)^2+2(a_2-b_1)(b_1-a_1) \Le  4(b_1-a_1)(a_2-b_1),
    \eeqn
    which clearly yields 
  $ (a_2-2b_1+a_1)^2 \Le  0,$ or equivalently,  $a_1+a_2=2b_1.$ Thus, this case reduces to that of (1). Therefore, the net $\left\{\frac{1}{p(m,n)}\right\}_{m,n\in \mathbb{Z}_+}$ is joint completely monotone.
  
\begin{casen}   $b^2(x)-4a(x)$ is a quadratic polynomial 
 \end{casen}
 Note that $b_0^2 > 4a_0.$ For every $m \in \mathbb{Z}_+,$ we already noted that 
 \beqn
 g^2(m)&=& (c_0m+c_1)^2+c_2,
  \eeqn
  where $c_0, c_1, c_2$ are given by 
  \beq \notag
  c_0&=&\frac{\sqrt{b_0^2-4a_0}}{2}, \quad
 c_1=\frac{b_0^2b_1-2a_0(a_1+a_2)}{2\sqrt{b_0^2-4a_0}},\\ \label{c2}
  c_2&=&
  -\frac{a_0(a_2-a_1)^2-b_0^2(b_1-a_1)(a_2-b_1)}{b_0^2-4a_0}.
  \eeq
Also note that by \eqref{1byp-f-g}, for $m,n \in \mathbb{Z}_+,$
  \beqn
  \frac{1}{p(m, n)} &=& \frac{1}{(n+b(m)/2)^2-((c_0m+c_1)^2+c_2)} \\
  &=& \frac{1}{(n+\frac{b(m)}{2}+c_0m+c_1)(n+\frac{b(m)}{2}-c_0m-c_1)-c_2}\\
   &=& \frac{1}{p_1(m,n)p_2(m,n)-c_2},
   \eeqn
   where $p_1$ and $p_2$ are given by
   \beqn
   p_1(m,n):=n+(b_0/2+c_0)m+(b_0b_1/2+c_1),  \quad m, n \in \mathbb{Z}_+ , \\ 
 p_2(m,n):=n+(b_0/2-c_0)m+(b_0b_1/2-c_1),  \quad m,n \in \mathbb{Z}_+. 
 \eeqn
By  \eqref{Impeq} and \eqref{c2}, $c_2 \Ge 0.$ If $c_1 \Ge 0,$ then  $b_0b_1/2+c_1 \Ge 0,$ and since 
\beqn
(b_0b_1/2+c_1)(b_0b_1/2-c_1) = p_1(0,0)p_2(0,0) > c_2\Ge 0,
\eeqn 
we must have $b_0b_1/2-c_1 > 0.$ 
Similarly, if $c_1<0,$ then $b_0b_1/2-c_1 > 0,$ and hence $b_0b_1/2+c_1 > 0.$ 
Thus, for any real value of $c_1,$  $\{1/p_1(m,n)\}_{m,n\in \mathbb{Z}_+}$  and $\{1/p_2(m,n)\}_{m,n\in \mathbb{Z}_+}$ are joint completely monotone nets (see \cite[Theorem~3.1]{ACN}). Note that for $m,n \in \mathbb{Z}_+,$ $p(m,n)=p_1(m,n)p_2(m,n)-c_2> 0.$ Thus, we have 
\beq \label{lessthan1}
 \frac{c_2}{p_1(m,n)p_2(m,n)}<1, \quad  m,n \in \mathbb{Z}_+.
\eeq
Therefore, for all $m,n \in \mathbb{Z_+},$
\beqn
\frac{1}{p_1(m,n)p_2(m,n)-c_2}&=&\frac{1}{p_1(m,n)p_2(m,n)}\Big(\frac{1}{1-\frac{c_2}{p_1(m,n)p_2(m,n)}}\Big)\\
&\overset{\eqref{lessthan1}}=&\sum_{k=0}^{\infty}\frac{c_2^k}{(p_1(m,n)p_2(m,n))^{k+1}}.
\eeqn
Since, for each $k\in \mathbb{Z}_+$, $\left\{\frac{c_2^k} {(p_1(m,n)p_2(m,n))^{k+1}}\right\}_{m,n \in \mathbb{Z}_+}$ is a joint completely monotone net, the finite sum $\left\{\sum_{k=0}^\ell\frac{c_2^k}{(p_1(m,n)p_2(m,n))^{k+1}}\right\}_{m,n\in \mathbb{Z}_+},$ where $ \ell \in \mathbb Z_+,$ is also joint completely monotone. Since the limit of the joint completely monotone net is joint completely monotone (see \cite[p. 130]{BCR1984}), we conclude that the net $\left\{\sum_{k=0}^{\infty}\frac{c_2^k}{(p_1(m,n)p_2(m,n))^{k+1}}\right\}_{m,n \in \mathbb{Z}_+}$ is joint completely monotone.
This completes the proof of the sufficiency part.
\end{proof}

 \section{The Cauchy dual subnormality problem}
In this section, we present a proof of the Theorem~\ref{CDSP-T3I}. We begin with the following proposition which is a consequence  of \cite[Proposition~4.6]{ACN}.

\begin{proposition} \label{3-iso-wt-char}
For a weighted $2$-shift $\mathscr W : \{w^{(j)}_\alpha\},$ the following statements are valid$:$
 \begin{enumerate}
\item[$\mathrm{(i)}$] $\mathscr W$ is a toral $3$-isometry if and only if for $\alpha=(\alpha_1,\alpha_2)\in \mathbb Z_+^2,$
\allowdisplaybreaks 
 \beqn 
 \|\mathscr W^\alpha e_0\|^2  = 1+a_1\alpha_1  +a_2\,\alpha_1^2+ (b_1\alpha_1+b_2) \alpha_2+ c_1\,\alpha_2^2,
 \eeqn
 where $a_1, a_2,b_1,b_2$ and $c_1$ are as follows$:$
 \beqn\label{coef-expre}
a_1=\rho_{10}-\frac{\rho_{20}}{2},\, 
 a_2= \frac{\rho_{20}}{2},\, b_1=\rho_{11},\, 
 b_2 = \rho_{01}-\frac{\rho_{02}}{2},\,
 c_1 = \frac{\rho_{02}}{2}.
 \eeqn
 \item[$\mathrm{(ii)}$]
$\mathscr W$ is a toral $3$-isometry with $\mathscr W_2$ being a $2$-isometry if and only if for $\alpha=(\alpha_1, \alpha_2)\in \mathbb Z_+^2,$
 \beqn 
 \|\mathscr W^\alpha e_0\|^2 
= 1+a\,\alpha_1+b\,\alpha_1^2 + (c +d\, \alpha_1\,) \,\alpha_2,
\eeqn
where $a,b,c$ and $d$ are as follows$:$
\allowdisplaybreaks
\beqn
a = \rho_{10}-\frac{\rho_{20}}{2}, \quad 
b =\frac{\rho_{20}}{2}, \quad 
c = \rho_{01}, \quad 
d = \rho_{11}.
\eeqn
\end{enumerate}

 \end{proposition}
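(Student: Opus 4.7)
The plan is to reduce the toral $3$-isometry condition for $\mathscr{W}$ to a scalar difference-equation condition on the single function $f : \mathbb{Z}_+^2 \to (0,\infty)$ defined by $f(\alpha) = \|\mathscr{W}^\alpha e_0\|^2$. Since $\mathscr{W}^\alpha e_\gamma$ is a scalar multiple of $e_{\gamma+\alpha}$ and the weight products telescope, one has $\|\mathscr{W}^\alpha e_\gamma\|^2 = f(\gamma+\alpha)/f(\gamma)$. Substituting this into the defining identity
\[
\sum_{0 \Le \alpha \Le \beta}(-1)^{|\alpha|}\binom{\beta}{\alpha}\mathscr{W}^{*\alpha}\mathscr{W}^{\alpha} = 0, \quad |\beta|=3,
\]
and evaluating on each basis vector $e_\gamma$ yields the equivalent scalar family $\triangle^\beta f(\gamma) = 0$ for every $\gamma \in \mathbb{Z}_+^2$ and every $\beta$ with $|\beta|=3$. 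This reduction is already the content of \cite[Proposition~4.6]{ACN}, which I would invoke directly rather than reprove.

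Next I would analyze the four scalar identities $\triangle_1^3 f = \triangle_1^2\triangle_2 f = \triangle_1\triangle_2^2 f = \triangle_2^3 f = 0$ in the Newton (binomial) basis. Writing
\[
f(\alpha) = \sum_{i,j \Ge 0} c_{ij}\binom{\alpha_1}{i}\binom{\alpha_2}{j}
\]
and exploiting $\triangle_1 \binom{\alpha_1}{i} = \binom{\alpha_1}{i-1}$ (with the analogous formula for $\alpha_2$), each of the four identities forces $c_{ij}=0$ for all indices $(i,j) \Ge \beta$. The intersection of the four surviving index sets is exactly $\{(0,0),(1,0),(2,0),(0,1),(0,2),(1,1)\}$; combined with the normalization $c_{00} = f(0,0) = 1$, this characterizes the toral $3$-isometric weighted $2$-shifts as those for which
\[
f(\alpha) = 1 + c_{10}\alpha_1 + c_{20}\binom{\alpha_1}{2} + c_{01}\alpha_2 + c_{02}\binom{\alpha_2}{2} + c_{11}\alpha_1\alpha_2.
\]

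To recover the explicit coefficients of part (i), I would use the orthogonality identity $\triangle_1^i \triangle_2^j \binom{\alpha_1}{k}\binom{\alpha_2}{l}\bigl|_{\alpha=0} = \delta_{ik}\delta_{jl}$, which gives $c_{ij} = \rho_{ij}$ immediately. Expanding $\binom{\alpha_j}{2} = (\alpha_j^2-\alpha_j)/2$ and collecting in the monomial basis then matches term by term with $1 + a_1\alpha_1 + a_2\alpha_1^2 + (b_1\alpha_1 + b_2)\alpha_2 + c_1\alpha_2^2$, producing the stated identities $a_1 = \rho_{10}-\rho_{20}/2$, $a_2 = \rho_{20}/2$, $b_1 = \rho_{11}$, $b_2 = \rho_{01}-\rho_{02}/2$, $c_1 = \rho_{02}/2$. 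Part (ii) is an immediate refinement: requiring $\mathscr{W}_2$ to be a $2$-isometry is equivalent (by the same scalar translation) to $\triangle_2^2 f = 0$, which forces $c_{02} = \rho_{02} = 0$, kills the $\binom{\alpha_2}{2}$ Newton term, and on re-expansion produces the stated four-parameter formula with $a = \rho_{10}-\rho_{20}/2$, $b=\rho_{20}/2$, $c = \rho_{01}$, $d = \rho_{11}$.

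No serious obstacle is anticipated, since the operator-to-scalar reduction is already packaged in \cite[Proposition~4.6]{ACN} and the remainder is algebraic bookkeeping in the Newton basis. The only point requiring care is verifying that all four identities $\triangle^\beta f = 0$ with $|\beta|=3$ are genuinely used in pinning down the six-point support: dropping any one of them would admit an extra Newton coefficient such as $c_{21}$ or $c_{12}$, introducing mixed monomials $\alpha_1^2\alpha_2$ or $\alpha_1\alpha_2^2$ and breaking the closed form asserted in the proposition.
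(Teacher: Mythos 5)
Your proposal is correct. The paper's own ``proof'' is a one-line citation of \cite[Proposition~4.6]{ACN} (specialized to $m=3$), so there is nothing in the paper to compare step-by-step; what you have done is supply the derivation that the paper outsources. Your reduction is sound: for a commuting weighted $2$-shift with positive weights the operators $\mathscr W^{*\alpha}\mathscr W^{\alpha}$ are diagonal, the weight products telescope to give $\|\mathscr W^{\alpha}e_\gamma\|^2=f(\gamma+\alpha)/f(\gamma)$ with $f(\alpha)=\|\mathscr W^{\alpha}e_0\|^2$, and the defining identity for a toral $3$-isometry becomes $\triangle^{\beta}f\equiv 0$ for all $|\beta|=3$. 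The Newton-basis bookkeeping is also right: the four conditions $\triangle_1^3f=\triangle_1^2\triangle_2 f=\triangle_1\triangle_2^2f=\triangle_2^3f=0$ kill exactly the coefficients $c_{ij}$ with $(i,j)\Ge\beta$ for some $|\beta|=3$, leaving the support $\{(0,0),(1,0),(2,0),(0,1),(0,2),(1,1)\}$, and $c_{ij}=\triangle_1^i\triangle_2^jf(0)=\rho_{ij}$ recovers the stated coefficients after expanding $\binom{\alpha_j}{2}=(\alpha_j^2-\alpha_j)/2$; imposing $\triangle_2^2f=0$ for part (ii) correctly forces $\rho_{02}=0$ and hence $c=\rho_{01}$. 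Your closing caveat (that all four third-order identities are needed to exclude $c_{21}$ and $c_{12}$) is exactly the point that distinguishes the toral condition from the weaker separate condition, and both directions of the equivalence are covered since every step is an equivalence. In short: the paper buys brevity by citation; your argument buys self-containedness at the cost of a page of elementary combinatorics, and the two are consistent.
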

 \begin{proof} This follows from \cite[Proposition~4.6 ]{ACN} (the case of $m = 3$).
 \end{proof}
The following proposition reveals a relation between joint subnormality of
the toral Cauchy dual and joint complete monotonicity.
 \begin{proposition}\label{subnormal-reciprocal}
    Let $\mathscr W$ be a torally
expansive weighted $n$-shift and let $\mathscr W^{\mathfrak{t}}$ be its toral Cauchy dual. Then $\mathscr W^{\mathfrak{t}}$ is jointly subnormal if and only if the net $\left\{ \frac{1}{ \|\mathscr W^{\alpha} e_0\|^2}\right\}_{ \alpha  \in \mathbb Z^n_+}$ is joint completely monotone.
 \end{proposition}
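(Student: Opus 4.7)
The plan is to combine three ingredients: the reciprocal identity \eqref{reciprocal}, the fact that $\mathscr W^{\mathfrak t}$ is itself a commuting weighted $n$-shift of contractions with positive weights, and the multivariable Berger/Jewell--Lubin characterization of joint subnormality of commuting weighted $n$-shifts via moments on $[0,1]^n$.

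First, I would verify that $\mathscr W^{\mathfrak t}$ is genuinely a commuting weighted $n$-shift whose weights lie in $(0,1]$. From \eqref{dual-action}, $\mathscr W^{\mathfrak t}_j$ shifts $e_\alpha$ to $e_{\alpha+\varepsilon_j}$ with positive weight $1/w^{(j)}_\alpha$; the commutation identity $w^{(i)}_\alpha w^{(j)}_{\alpha+\varepsilon_i}=w^{(j)}_\alpha w^{(i)}_{\alpha+\varepsilon_j}$ satisfied by $\mathscr W$ is invariant under passing to reciprocals, and a direct computation on $e_\alpha$ shows that the $\mathscr W^{\mathfrak t}_j$ commute. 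Toral expansivity $\mathscr W^*_j\mathscr W_j \ge I$ translates to $w^{(j)}_\alpha\ge 1$ for every $\alpha$ and $j$, so each $\mathscr W^{\mathfrak t}_j$ is a contraction.

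Next, I would invoke the moment characterization of joint subnormality for a commuting weighted $n$-shift with positive weights that is a toral contraction (Jewell--Lubin \cite{JL1979}; see also \cite{AS1999, JS2001}): such a tuple $T$ is jointly subnormal if and only if the ground-vector moment net $\{\|T^\alpha e_0\|^2\}_{\alpha\in\mathbb Z^n_+}$ is a Hausdorff moment net on $[0,1]^n$. Applied to $T=\mathscr W^{\mathfrak t}$ and combined with \eqref{reciprocal}, this reduces joint subnormality of $\mathscr W^{\mathfrak t}$ to the statement that $\{1/\|\mathscr W^\alpha e_0\|^2\}_{\alpha\in\mathbb Z^n_+}$ is a Hausdorff moment net on $[0,1]^n$.

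Finally, I would close the loop by applying the multidimensional Hausdorff moment theorem recalled in the introduction (cf.\ \cite[Proposition~4.6.11]{BCR1984}), which equates Hausdorff moment nets on $[0,1]^n$ with joint completely monotone nets. The substantive step is the single invocation of the Berger/Jewell--Lubin moment characterization; the rest of the argument is a short algebraic combination of \eqref{dual-action}, \eqref{reciprocal}, toral expansivity, and the Hausdorff moment theorem, so no real obstacle is expected.
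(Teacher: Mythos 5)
Your proposal is correct and follows essentially the same route as the paper: show that $\mathscr W^{\mathfrak t}$ is a torally contractive commuting weighted $n$-shift, apply the moment-theoretic characterization of joint subnormality for such shifts, and conclude via \eqref{reciprocal}. The only cosmetic difference is that you split the key equivalence into a Berger/Jewell--Lubin moment step plus the Hausdorff moment theorem, whereas the paper cites a single combined statement (deduced from \cite[Theorem 4.4]{Athavale1987} and \eqref{W-beta}).
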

 \begin{proof} Recall that for a torally contractive weighted $n$-shift $U$, the following holds$:$
  \begin{align} \label{subnor-if-r}
   \begin{minipage}{67ex}
\text{$ U$ is jointly subnormal if and only if $\{\| U^\alpha e_0\|^2\}_{\alpha \in \mathbb Z^n_+}$ is a joint}
\\
\text{completely monotone net.} 
\end{minipage}
 \end{align}
 This fact can be deduced from \cite[Theorem 4.4]{Athavale1987}, together with \eqref{W-beta} 
 (see also the discussion prior to \cite[Eqn~(E)]{Athavale2001}). By  \eqref{dual-action} and the discussion following it,  $\mathscr W^{\mathfrak t}$
is a commuting weighted $n$-shift. Since $\mathscr W$ is torally expansive, routine calculations show that $\mathscr W^t$ is torally contractive. This, combined with \eqref{reciprocal} and \eqref{subnor-if-r}, completes the proof of the proposition.
\end{proof}
{
We now present a solution to the CDSP for torally expansive toral $3$-isometric weighted $2$-shifts.

\begin{proof}[Proof of Theorem~\ref{CDSP-T3I}]
Recall that by Proposition~\ref{3-iso-wt-char}(i), for $\alpha=(\alpha_1, \alpha_2) \in \mathbb Z^2_+,$
\beq \label{pos-1-coef}
 \|\mathscr W^\alpha e_0\|^2=1+a_1\alpha_1+a_2\alpha_1^2+(b_1\alpha_1+b_2)\alpha_2+c_1\alpha_2^2,
\eeq
where $a_1,a_2,b_1,b_2$ and $c_1$ are given by
\beqn
a_1=\rho_{10}-\frac{\rho_{20}}{2}, \quad 
 a_2= \frac{\rho_{20}}{2}, \quad  b_1=\rho_{11}, \quad 
 b_2 = \rho_{01}-\frac{\rho_{02}}{2}, \quad 
 c_1 = \frac{\rho_{02}}{2}.
 \eeqn
 
It is clear from \eqref{pos-1-coef} that $a_2\Ge0$ and $c_1\Ge 0.$ Let $p(x,y)=1+a_1x+a_2x^2+(b_1x+b_2)y+c_1y^2.$ By \eqref{reciprocal}, $\alpha= (\alpha_1, \alpha_2) \in \mathbb{Z}_+^2,$
\beqn
 \|(\mathscr W^{\mathfrak{t}})^\alpha e_0\|^2  = 
\frac{1}{p(\alpha_1,\alpha_2)}.
\eeqn
We record that for every $\alpha_1,\alpha_2 \in \mathbb{Z}_+,$ $p(\alpha_1,\alpha_2)>0.$

(a) This has been proved in \cite[Theorem~4.9]{ACN}.

(b)  Since $\mathscr W_1$ is not a $2$-isometry, $a_2>0$. We divide the proof in two cases.
\begin{case-new}
    $\mathscr W_2$ is a $2$-isometry.
\end{case-new}
 Note that $c_1=0.$ By Corollary~\ref{Imp-coro}(i), $b_1,b_2\Ge 0.$ For the necessity part, assume that $\mathscr W^{\mathfrak{t}}$ is jointly subnormal.
By Proposition~\ref{subnormal-reciprocal},
$\left\{\frac{1}{p(\alpha_1,\alpha_2)}\right\}_{\alpha_1,\alpha_2 \in \mathbb Z_+}$ is a joint completely monotone net. Hence, it is separate completely monotone. Applying Lemma~\ref{freq-3} to the polynomial $p(x,0),$ we obtain $a_1^2 \Ge 4a_2$ and $a_1>0.$ This yields $(2\rho_{10}-\rho_{20})^2\Ge 8\rho_{20}$ and $2\rho_{10}>{\rho_{20}}.$ In view of Corollary~\ref{Imp-coro}(ii), either $b_1,b_2=0$ or $b_1,b_2>0.$ If $b_1,b_2=0,$ we are done. If $b_1,b_2>0,$ we apply Theorem~\ref{bi-deg-2-1-second} to complete the proof of the necessity part.

To see the sufficiency part, assume that $(2\rho_{10}-\rho_{20})^2\Ge 8\rho_{20}$ and $2\rho_{10}>{\rho_{20}}.$ This is equivalent to $a_1>0$ and $a_1^2\Ge4a_2.$  Since $a_1^2 \Ge  4a_2,$ we have
\beqn 
p(x,y)&=&a_2\Big(x+\frac{a_1-\sqrt{a_1^2-4a_2}}{2a_2}\Big)\Big(x+\frac{a_1+\sqrt{a_1^2-4a_2}}{2a_2}\Big)\\
&&+(b_1x+b_2)y.
\eeqn
We consider the following two subcases to complete the proof of sufficiency part in this case.

\begin{subcase}
 $\rho_{11}=0,\rho_{01}=0,{\rho_{02}}=0$ (equivalently, $b_1=0,b_2=0$).
 Note that $\left\{\frac{1}{p(\alpha_1,\alpha_2)}\right\}_{\alpha_1,\alpha_2\in \mathbb{Z}_+}$ is the product of two completely monotone sequences and hence it is joint completely monotone. By Proposition \ref{subnormal-reciprocal}, $\mathscr W^{\mathfrak{t}}$ is jointly subnormal.
 \end{subcase}
\begin{subcase}
   $\rho_{11}>0,2\rho_{01}>\rho_{02}$ and $$(\rho_{20}\rho_2-\rho_{11}\rho_1)^2  \Le (4\rho_{11}^2-{\rho_{20}\rho_{02}}) (\frac{\rho^2_1}{4}-2\rho_{20})
   $$
where $\rho_1=2\rho_{10}-\rho_{20}$ and $\rho_2=2\rho_{01}-{\rho_{02}}$ (equivalently, $b_1>0,b_2>0,$ and  $({2a_2b_2}-a_1b_1)^2\Le b_1^2 (a_1^2-4a_2)$).
 A routine calculation yields that
\beqn 
\frac{a_1-\sqrt{a_1^2-4a_2}}{2a_2} \Le \frac{b_2}{b_1} \Le \frac{a_1+\sqrt{a_1^2-4a_2}}{2a_2}.
\eeqn
 We now apply Theorem~\ref{bi-deg-2-1-second} and Proposition~\ref{subnormal-reciprocal} to complete the proof of the sufficiency part in this case.
\end{subcase}
\begin{case-new}
$\mathscr W_2$ is not a $2$-isometry.
\end{case-new}
Note that $c_1>0$.
By Proposition~\ref{subnormal-reciprocal},
$\mathscr W^{\mathfrak{t}}$ is jointly subnormal if and only if the net $\left\{\frac{1}{p(\alpha_1,\alpha_2)}\right\}_{\alpha_1,\alpha_2 \in \mathbb Z_+}$ is joint completely monotone or equivalently  
\beqn \label{mod-SCM}
\left\{\frac{1}{1/c_1+(a_1/c_1)\alpha_1+(a_2/c_1)\alpha_1^2+((b_1\alpha_1+b_2)/c_1)\alpha_2+\alpha_2^2}\right\}_{\alpha_1,\alpha_2 \in \mathbb Z_+}
\eeqn
is a joint completely monotone net. The proof of the necessity part now follows from Lemmas~\ref{freq-3},~\ref{pos-(2,2)} and Theorem~\ref{MT}. Sufficiency follows from Theorem~\ref{MT}.

(c) This follows from part (b), by interchanging the role of $\mathscr W_1$ and $\mathscr W_2.$
\end{proof} 
}


\noindent
\textit{Acknowledgements.} 
I would like to express my sincere gratitude to Prof. Akash Anand and Prof. Sameer Chavan for their invaluable guidance and support throughout the preparation of this paper.

{}
\end{document}